\newcommand{\End}{\mathrm{End}}
\newcommand{\Sym}{\mathrm{Sym}}
\newcommand{\id}{\mathrm{id}}
\newcommand{\Res}{\mathrm{Res}}
\newcommand{\CA}{\mathrm{CA}}
\newcommand{\GCA}{\mathrm{GCA}}
\newcommand{\Hom}{\mathrm{Hom}}
\newcommand{\Fix}{\mathrm{Fix}}
\theoremstyle{plain}
\newtheorem{corollary}{Corollary}
\newtheorem{lemma}{Lemma}
\newtheorem{proposition}{Proposition}
\newtheorem{theorem}{Theorem}
\theoremstyle{definition}
\newtheorem{definition}{Definition}
\newtheorem{example}{Example}
\newtheorem{remark}{Remark}
\newtheorem{question}{Question}
\begin{document}

\title{Further results on generalized cellular automata}
\author[1]{Alonso Castillo-Ramirez\footnote{Email: alonso.castillor@academicos.udg.mx}}
\author[1]{Luguis de los Santos Ba\~nos\footnote{Email: luguis.banos@academicos.udg.mx}}
\affil[1]{Centro Universitario de Ciencias Exactas e Ingenier\'ias, Universidad de Guadalajara, M\'exico.}

\maketitle

\begin{abstract}
Given a finite set $A$ and a group homomorphism $\phi : H \to G$, a \emph{$\phi$-cellular automaton} is a function $\mathcal{T} : A^G \to A^H$ that is continuous with respect to the prodiscrete topologies and $\phi$-equivariant in the sense that $h \cdot \mathcal{T}(x) = \mathcal{T}( \phi(h) \cdot x)$, for all $x \in A^G, h \in H$, where $\cdot$ denotes the shift actions of $G$ and $H$ on $A^G$ and $A^H$, respectively. When $G=H$ and $\phi = \id$, the definition of $\id$-cellular automata coincides with the classical definition of cellular automata. The purpose of this paper is to expand the theory of $\phi$-cellular automata by focusing on the differences and similarities with their classical counterparts. After discussing some basic results, we introduce the following definition: a $\phi$-cellular automaton $\mathcal{T} : A^G \to A^H$ has the \emph{unique homomorphism property} (UHP) if $\mathcal{T}$ is not $\psi$-equivariant for any group homomorphism $\psi : H \to G$, $\psi \neq \phi$. We show that if the difference set $\Delta(\phi, \psi)$ is infinite, then $\mathcal{T}$ is not $\psi$-equivariant; it follows that when $G$ is torsion-free abelian, every non-constant $\mathcal{T}$ has the UHP. Furthermore, inspired by the theory of classical cellular automata, we study $\phi$-cellular automata over quotient groups, as well as their restriction and induction to subgroups and supergroups, respectively.   \\

\textbf{Keywords:} cellular automata; group homomorphism; quotient group; restriction and induction.       
\end{abstract}

\section{Introduction}\label{intro}

For any group $G$ and any set $A$, a cellular automaton (CA) is a transformation of the configuration space $A^G$, consisting on all functions $x : G \to A$, that is determined by a fixed local rule that depends on a finite memory set. CA are used in discrete complex systems modeling, and make an appearance in several areas of mathematics, such as symbolic dynamics \cite{LM95}, where they are also known as \emph{sliding block codes}. Recently, the theory of CA has gained solid foundations due to its connections with group theory and topology (see the highly cited book \cite{CSC10}). 

The definition of generalized cellular automata (GCA) proposed in \cite{GCA} allows us to consider transformations between different configuration spaces $A^G$ and $A^H$, where $H$ is another arbitrary group, via a group homomorphism $\phi : H \to G$. Specifically, a \emph{$\phi$-cellular automaton} is a function $\mathcal{T} : A^G \to A^H$ such that there is a finite subset $T \subseteq G$, called a \emph{memory set} of $\mathcal{T}$, and a \emph{local function} $\mu : A^T \to A$ satisfying
\begin{equation}\label{defGCA}
\mathcal{T}(x)(h) = \mu (( \phi(h^{-1}) \cdot x) \vert_{T}),  \quad \forall x \in A^G, h \in H,
\end{equation}
where $\cdot$ is the \emph{shift action} of $G$ on $A^G$ defined by
\[ (g \cdot x)(k) := x(g^{-1}k), \quad \forall x \in A^G, g,k \in G. \]

The classical definition of CA may be recovered from the above definition by letting $G=H$ and $\phi = \id$. It was shown in \cite{GCA} that GCA share many similar properties with CA. For example, both CA and GCA are continuous functions in the \emph{prodiscrete topologies} of $A^G$ and $A^H$, which are the product topologies of the discrete topology of $A$. CA are $G$-equivariant in the sense that they commute with the shift action, while a $\phi$-cellular automaton $\mathcal{T} : A^G \to A^H$ is \emph{$\phi$-equivariant} in the following sense:
\[ h \cdot \mathcal{T}(x) = \mathcal{T}( \phi(h) \cdot x), \quad \forall x \in A^G, h \in H.  \]
This notion of $\phi$-equivariance resembles the notion of \emph{group action homomorphism} \cite[Def. 2.1]{GLM}, except that in the latter, the direction of the group homomorphism is reversed (i.e., a group action homomorphism from $A^G$ to $A^H$ is a function $f : A^G \to A^H$ and a group homomorphism $\varphi : G \to H$ such that $f(g \cdot x) = \varphi(g) \cdot f(x)$, for all $g \in G$, $x \in A^G$). A generalized version of the Curtis-Hedlund Theorem proved in \cite{GCA} characterizes $\phi$-cellular automata as the continuous $\phi$-equivariant functions from $A^G$ to $A^H$.

The goal of this paper is to expand the basic theory of generalized cellular automata by focusing on the differences and similarities with their classical counterparts. In Section 2, we discuss some basic results on the theory of GCA. In Lemma \ref{lemma:GCA-tau_phi}, we observe that for every $\phi$-cellular automaton $\mathcal{T} : A^G \to A^H$ there exists a unique $\id$-cellular automaton $\tau : A^G \to A^G$ such that 
\[ \mathcal{T} = \phi^* \circ \tau, \]
where $\phi^* : A^G \to A^H$ is the $\phi$-cellular automaton defined by $\phi^*(x) = x \circ \phi$, for all $x \in A^G$. We also show that a group $G$ is \emph{GCA-surjunctive}, in the sense that, for every finite $A$ and every $\phi \in \End(G)$, every injective $\phi$-cellular automaton $\mathcal{T} : A^G \to A^G$ is surjective, if and only if $G$ is Hopfian and surjunctive (in the classical sense for CA). This section finishes with the observation that a subgroup $K \leq G$ is fully invariant (i.e., $\phi(K) \subseteq K$, for all $\phi \in \End(G)$) if and only if the set of all $\phi$-cellular automata from $A^G$ to $A^G$ that admit a memory set contained in $K$ is closed under composition.

In Section 3, we consider the following question: if $\psi, \phi : H \to G$ are group homomorphisms and $\tau : A^G \to A^G$ is an $\id$-cellular automaton, when does $\phi^* \circ \tau = \psi^* \circ \tau$ imply $\phi = \psi$? Inspired by this, we say that a $\phi$-cellular automaton $\mathcal{T} : A^G \to A^H$ has the \emph{unique homomorphism property} (UHP) if $\mathcal{T}$ is not $\psi$-equivariant for every $\psi : H \to G$, $\psi \neq \phi$. It has been already shown in \cite{GCA} that injective GCA have the UHP. The main result of this section is the following (see Theorem \ref{th-main}). 

\begin{theorem}\label{main-intro}
If the difference set between $\phi$ and $\psi$, defined by
\[ \Delta(\phi, \psi) := \{ \psi(h)^{-1} \phi(h) : h \in H \} \subseteq G, \]
is infinite, then $\phi^* \circ \tau \neq \psi^* \circ \tau$ for every non-constant $\id$-cellular automaton $\tau : A^G \to A^G$. Hence, if $G$ is a  torsion-free abelian group, then every non-constant $\phi$-cellular automaton $\mathcal{T} : A^G \to A^H$ has the UHP. 
\end{theorem}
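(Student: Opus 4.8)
The plan is to treat the two assertions separately, using Lemma~\ref{lemma:GCA-tau_phi} to work throughout with the presentation $\mathcal{T} = \phi^* \circ \tau$ for a unique $\id$-cellular automaton $\tau$. First I would record the bridge between the functional statement and the UHP. If $\mathcal{T} = \phi^* \circ \tau$ happens to be $\psi$-equivariant, then specializing the identity $h \cdot \mathcal{T}(x) = \mathcal{T}(\psi(h) \cdot x)$ and using the $G$-equivariance of $\tau$ forces $\tau(x)(\phi(h)) = \tau(x)(\psi(h))$ for all $x \in A^G$ and $h \in H$, which is precisely $\phi^* \circ \tau = \psi^* \circ \tau$. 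Hence establishing $\phi^* \circ \tau \neq \psi^* \circ \tau$ for non-constant $\tau$ is exactly what is needed to deduce that $\mathcal{T}$ is not $\psi$-equivariant, and the ``hence'' clause will follow once I verify that $\Delta(\phi,\psi)$ is infinite in the torsion-free abelian setting.

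Next I would reformulate the equation $\phi^* \circ \tau = \psi^* \circ \tau$ into a \emph{shift-blindness} condition expressed directly in terms of $\Delta(\phi,\psi)$. Spelled out, the equation reads $\tau(x)(\phi(h)) = \tau(x)(\psi(h))$ for all $x$ and $h$. Replacing $x$ by $\psi(h) \cdot x$ and invoking $\tau(\psi(h) \cdot x) = \psi(h) \cdot \tau(x)$, the shift cancels and the identity becomes $\tau(x)(\psi(h)^{-1}\phi(h)) = \tau(x)(1_G)$. Since this substitution is a bijection in $x$ and every element of $\Delta(\phi,\psi)$ has the form $\psi(h)^{-1}\phi(h)$, I conclude that $\phi^* \circ \tau = \psi^* \circ \tau$ is equivalent to $\tau(x)(d) = \tau(x)(1_G)$ for every $d \in \Delta(\phi,\psi)$ and every $x \in A^G$. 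The only delicate point here is the equivariance bookkeeping, i.e.\ tracking where the shift lands; the payoff is a clean condition amenable to a finiteness argument.

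The combinatorial core is where I expect the main obstacle, and it pits the finiteness of a memory set against the infiniteness of $\Delta(\phi,\psi)$. Fix a memory set $S$ and local rule $\nu : A^S \to A$ for $\tau$, so that $\tau(x)(1_G) = \nu(x\vert_S)$ and $\tau(x)(d) = \nu((d^{-1} \cdot x)\vert_S)$, the latter depending only on the coordinates $x\vert_{dS}$. Because $S$ is finite, the set $SS^{-1}$ is finite, while $dS \cap S \neq \emptyset$ forces $d \in SS^{-1}$; hence if $\Delta(\phi,\psi)$ is infinite I may choose $d \in \Delta(\phi,\psi)$ with $dS \cap S = \emptyset$. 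For such $d$ the coordinate blocks $S$ and $dS$ are disjoint, so given any two patterns $p, q \in A^S$ I can build a single configuration $x$ with $x\vert_S = p$ and $(d^{-1}\cdot x)\vert_S = q$; the blindness condition then yields $\nu(q) = \nu(p)$. As $p,q$ were arbitrary, $\nu$ is constant, making $\tau$ constant, contrary to hypothesis. This contradiction proves $\phi^* \circ \tau \neq \psi^* \circ \tau$ for every non-constant $\tau$, which is the first assertion.

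Finally I would settle the torsion-free abelian case by showing $\psi \neq \phi$ implies $\Delta(\phi,\psi)$ is infinite. When $G$ is abelian the map $h \mapsto \psi(h)^{-1}\phi(h)$ is a group homomorphism $H \to G$ (the ``difference'' of $\phi$ and $\psi$), so $\Delta(\phi,\psi)$ is its image, a subgroup of $G$. If $\psi \neq \phi$ this homomorphism is nontrivial, so $\Delta(\phi,\psi)$ contains some $a \neq 1_G$; since $G$ is torsion-free, $a$ has infinite order and $\Delta(\phi,\psi)$ already contains the infinite cyclic group $\langle a \rangle$. Thus $\Delta(\phi,\psi)$ is infinite for every $\psi \neq \phi$, and combining this with the first assertion and the bridge of the opening paragraph gives that every non-constant $\mathcal{T} = \phi^* \circ \tau$ fails to be $\psi$-equivariant for all $\psi \neq \phi$, i.e.\ $\mathcal{T}$ has the UHP. (One should also note that $\mathcal{T}$ is non-constant iff $\tau$ is, which follows from the $G$-equivariance of $\tau$ and $1_G \in \phi(H)$.)
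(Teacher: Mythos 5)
Your proposal is correct and follows essentially the same route as the paper: your observation that $dS \cap S \neq \emptyset$ forces $d \in SS^{-1}$ is exactly Lemma~\ref{le-power}, your disjoint-blocks configuration reproduces the construction in the proof of Theorem~\ref{th-main} (merely normalized by the shift $\psi(h)^{-1}$, where the paper works with the translates $\phi(h)T$ and $\psi(h)T$ directly), and your torsion-free abelian argument is Corollary~\ref{cor-UHP} verbatim. The additional bookkeeping you supply --- that $\psi$-equivariance of $\mathcal{T} = \phi^* \circ \tau$ yields $\phi^* \circ \tau = \psi^* \circ \tau$, and that $\mathcal{T}$ is constant if and only if $\tau$ is --- is sound and matches how the paper handles these points via Lemma~\ref{lemma:GCA-tau_phi} and the discussion opening Section~\ref{Section:Homomorphisms}.
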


We finish Section 2 by proving the converse of Theorem \ref{main-intro} when $G$ is abelian or locally finite (Proposition \ref{prop-last}). 

\begin{theorem}
If $G$ is abelian or locally finite, then $\phi^* \circ \tau \neq \psi^* \circ \tau$ for every non-constant $\id$-cellular automaton $\tau : A^G \to A^G$ if and only if the set $\Delta(\phi, \psi)$ is infinite. 
\end{theorem}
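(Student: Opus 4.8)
The forward implication---that $\Delta(\phi,\psi)$ infinite forces $\phi^*\circ\tau\neq\psi^*\circ\tau$ for every non-constant $\id$-cellular automaton $\tau$---is exactly the content of Theorem \ref{main-intro}, so it remains only to establish the converse. The plan is to prove its contrapositive: assuming $\Delta(\phi,\psi)$ is \emph{finite}, I will construct an explicit non-constant $\id$-cellular automaton $\tau : A^G \to A^G$ with $\phi^*\circ\tau=\psi^*\circ\tau$ (assuming, as usual, $|A|\ge 2$, since otherwise no non-constant $\tau$ exists and the statement is vacuous). The first step is to reformulate the equation $\phi^*\circ\tau=\psi^*\circ\tau$ in terms of the local behaviour of $\tau$. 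Writing $F(x):=\tau(x)(e)\in A$ for the evaluation of $\tau$ at the identity, the $\id$-equivariance of $\tau$ gives $\tau(x)(g)=F(g^{-1}\cdot x)$ for all $g\in G$, so that $(\phi^*\circ\tau)(x)(h)=F(\phi(h)^{-1}\cdot x)$ and $(\psi^*\circ\tau)(x)(h)=F(\psi(h)^{-1}\cdot x)$. After the substitution $x\mapsto\psi(h)\cdot x$, the identity $\phi^*\circ\tau=\psi^*\circ\tau$ becomes equivalent to $F(\phi(h)^{-1}\psi(h)\cdot x)=F(x)$ for all $x\in A^G$ and all $h\in H$. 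As $h$ ranges over $H$ the elements $\phi(h)^{-1}\psi(h)$ range over $\Delta(\phi,\psi)^{-1}$, and since invariance of $F$ under a shift by $d$ coincides with invariance under $\langle d\rangle$, this is in turn equivalent to the invariance of $F$ under the shift action of the subgroup $N:=\langle\Delta(\phi,\psi)\rangle\le G$ (using $\langle\Delta(\phi,\psi)^{-1}\rangle=\langle\Delta(\phi,\psi)\rangle$).

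The key step, and the only place where the hypotheses on $G$ enter, is to show that finiteness of $\Delta(\phi,\psi)$ forces $N$ to be \emph{finite}. This is not automatic from $|\Delta(\phi,\psi)|<\infty$ alone---a single element of infinite order already generates an infinite cyclic subgroup---so the two cases must be handled separately. When $G$ is locally finite, $N$ is generated by the finite set $\Delta(\phi,\psi)$, hence is a finitely generated subgroup of a locally finite group and is therefore finite. When $G$ is abelian, the map $H\to G$, $h\mapsto\psi(h)^{-1}\phi(h)$, is a group homomorphism, so $\Delta(\phi,\psi)=\mathrm{Im}(\phi-\psi)$ is already a subgroup of $G$; consequently $N=\Delta(\phi,\psi)$, which is finite by assumption. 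In both cases we conclude that $N$ is a finite subgroup of $G$.

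It remains to build a non-constant continuous $N$-invariant function $F:A^G\to A$, which is possible precisely because $N$ is finite. The idea is to use the finite set $N$ itself as a memory set and to choose a local rule symmetric under the left-regular action of $N$ on its own coordinates: fixing distinct $a_0,a_1\in A$, set $F(x)=a_1$ if $x(n)=a_0$ for all $n\in N$ and $F(x)=a_0$ otherwise. The defining condition is invariant under any permutation of the index set $N$, in particular under $n\mapsto m^{-1}n$ for $m\in N$, so $F(m\cdot x)=F(x)$ for all $m\in N$, while $F$ is plainly non-constant and continuous. Setting $\tau(x)(g):=F(g^{-1}\cdot x)$ then defines, by the generalized Curtis--Hedlund theorem with $\phi=\id$, a continuous $\id$-equivariant---hence $\id$-cellular---map $\tau:A^G\to A^G$; it is non-constant since $F$ is, and by the reformulation of the first paragraph it satisfies $\phi^*\circ\tau=\psi^*\circ\tau$. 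I expect the main obstacle to be the second step: one must resist arguing directly from $|\Delta(\phi,\psi)|<\infty$ and instead pin down exactly why the abelian and locally finite hypotheses each prevent the generated subgroup $N$ from becoming infinite, as it is this finiteness that drives the explicit construction.
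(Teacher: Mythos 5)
Your proposal is correct and follows essentially the same route as the paper's proof of Proposition~\ref{prop-last}: the forward direction is delegated to Theorem~\ref{th-main}, the converse reduces to showing that $N=\langle\Delta(\phi,\psi)\rangle$ is finite (handling the abelian and locally finite cases exactly as the paper does), and the construction then exploits a local rule symmetric under permutations of the finite subgroup $N$, which is precisely the mechanism of the paper's Proposition~\ref{prop}. The only cosmetic differences are your choice of symmetric rule (an ``all coordinates equal $a_0$'' detector instead of the paper's sum modulo $q$) and that you verify the invariance $F(n\cdot x)=F(x)$ directly rather than citing the symmetric-rule criterion as a separate lemma.
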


In Section 4, we study $\phi$-cellular automata over quotient groups. For $\phi \in \End(G)$, we prove that if $N$ is a normal $\phi$-invariant subgroup of $G$, then there exists a unique $\hat{\phi}$-cellular automaton $\widehat{\mathcal{T} } : A^{G/N} \to A^{G/N}$, with $\hat{\phi}(gN):=\phi(g)N$, such that the following diagram commutes: 
\[ \begin{tikzcd}
A^{G/N} \arrow{r}{\widehat{\mathcal{T} }} \arrow[swap]{d}{\rho^*} & A^{G/N} \arrow{d}{\rho^*} \\%
A^G \arrow{r}{\mathcal{T}}& A^G
\end{tikzcd}\]
where $\rho : G \to G/N$ is the canonical projection and $\rho^*(x) := x \circ \rho$, for all $x \in A^G$. When $N$ is fully invariant, the map $\mathcal{T} \to \widehat{\mathcal{T}}$ is a monoid homomorphism. 

In Section 5, we study induction and restriction for $\phi$-cellular automata. For a subgroup $K \leq H$, the restriction of a $\phi$-cellular automaton $\mathcal{T}: A^G\to A^{H}$ whose memory set is contained in $\phi(K)$ is the unique $\phi \vert_K^{\phi(K)}$-cellular automaton  $\mathcal{T}_K : A^{\phi(K)} \to A^K$, where $\phi \vert_K^{\phi(K)} : K \to \phi(K)$ is the domain and codomain restriction of $\phi$, such that the following diagram commutes
\[ \begin{tikzcd}
A^{G} \arrow{r}{\mathcal{T}} \arrow[swap]{d}{\Res_{\phi(K)}} & A^{H} \arrow{d}{\Res_K} \\%
A^{\phi(K)} \arrow{r}{\mathcal{T}_K}& A^K
\end{tikzcd}\]
where $\Res_{\phi(K)}$ and $\Res_K$ are the natural restriction maps (e.g., $\Res_K(x) = x \vert_K$, for all $x \in A^H$). Induction is defined analogously for $\phi \vert_K^{\phi(K)}$-cellular automata. In both cases, when $G=H$ and $\phi=\id$, our definitions coincide with the definitions of restriction and induction for CA. We determine how restriction behaves in the composition of GCA, and prove that $\mathcal{T}$ is injective, or bijective, if and only if $\mathcal{T}_K$ is injective, or bijective, and $\phi$ is surjective, or bijective, respectively. However, it is an open question whether there is any connection between the surjectivity of $\mathcal{T}$ and $\mathcal{T}_K$.


\section{Basic results} 

We assume that the reader knows the fundamentals of group theory, topology and the theory of classical cellular automata over groups (see \cite[Ch. 1]{CSC10}). 

For the rest of the paper, let $A$ be a finite set, and let $G$ and $H$ be groups. As the case $\vert A \vert = 1$ is trivial and degenerate, we shall assume that $\vert A \vert \geq 2$ and that $\{ 0,1 \} \subseteq A$. Denote by $\Hom(H,G)$ the set of all group homomorphisms from $H$ to $G$. Define $\End(G) := \Hom(G,G)$.  

Denote by $\CA(A^G)$ and $\GCA(A^G,A^H)$ the set of all $\id$-cellular automata $\tau : A^G \to A^G$ and the set of all $\phi$-cellular automata $\mathcal{T} : A^G \to A^H$, for any $\phi \in \Hom(H,G)$, respectively. Define $\GCA(A^G) := \GCA(A^G, A^G)$. 

The following theorem summarizes the three main results obtained in \cite{GCA}.

\begin{theorem}\label{th-old}
\begin{enumerate}
\item  A function $\mathcal{T}: A^G \to A^H$ is a $\phi$-cellular automaton if and only if $\mathcal{T}$ is continuous in the prodiscrete topologies and $\phi$-equivariant.

\item Let $\mathcal{T} : A^G \to A^H$ be a $\phi$-cellular automaton with memory set $T \subseteq G$ and let $\mathcal{S} : A^H \to A^K$ be a $\psi$-cellular automaton with memory set $S \subseteq H$. Then, $\mathcal{S}\circ\mathcal{T}:A^G\to A^K$ is a $(\phi\circ\psi)$-cellular automaton with memory set $\phi(S)T := \{ \phi(s)t : s \in S, t \in T \}$.

\item A $\phi$-cellular automaton $\mathcal{T} : A^G \to A^H$ is invertible (in the sense that there exists a $\psi$-cellular automaton $\mathcal{S} : A^H \to A^G$ such that $\mathcal{T} \circ \mathcal{S}= \id_{A^H}$ and $\mathcal{S}  \circ \mathcal{T}  = \id_{A^G}$) if and only if $\mathcal{T} $ is bijective. 
\end{enumerate}
\end{theorem}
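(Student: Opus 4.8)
The plan is to prove the three parts in turn, using the finiteness of $A$ as the recurring engine: since $A$ is finite and discrete, $A^G$ and $A^H$ are compact Hausdorff in the prodiscrete topology, the shift actions are faithful because $|A|\geq 2$, and a continuous bijection between compact Hausdorff spaces is automatically a homeomorphism. For part (1), the forward implication is a direct verification: if $\mathcal{T}$ has memory set $T$ and local function $\mu$, then each coordinate $\mathcal{T}(x)(h)$ depends only on $x$ restricted to the finite set $\phi(h)T$, so $\mathcal{T}$ is continuous, and substituting \eqref{defGCA} shows $h\cdot\mathcal{T}(x)=\mathcal{T}(\phi(h)\cdot x)$. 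For the converse I would first reduce to the single coordinate $1_H$: by $\phi$-equivariance, $\mathcal{T}(x)(h)=(h^{-1}\cdot\mathcal{T}(x))(1_H)=\mathcal{T}(\phi(h^{-1})\cdot x)(1_H)$, so $\mathcal{T}$ is entirely determined by $\mu_0:A^G\to A$, $\mu_0(x):=\mathcal{T}(x)(1_H)$. The substantive step, and the main obstacle of part (1), is to show that $\mu_0$ factors through finitely many coordinates: since $\mathcal{T}$ is continuous and $A$ discrete, each fiber $\mu_0^{-1}(a)$ is clopen, and compactness of $A^G$ lets me cover the space by finitely many cylinders on which $\mu_0$ is constant; the finite set $T$ of coordinates involved is a memory set, $\mu:=\mu_0$ read off $A^T$ is the local function, and matching against \eqref{defGCA} recovers the GCA form.

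For part (2), I would argue that $\mathcal{S}\circ\mathcal{T}$ is continuous, being a composition of continuous maps, and $(\phi\circ\psi)$-equivariant via the chain $k\cdot(\mathcal{S}\circ\mathcal{T})(x)=\mathcal{S}(\psi(k)\cdot\mathcal{T}(x))=\mathcal{S}(\mathcal{T}(\phi(\psi(k))\cdot x))=(\mathcal{S}\circ\mathcal{T})((\phi\circ\psi)(k)\cdot x)$, so part (1) already yields that it is a $(\phi\circ\psi)$-cellular automaton. To pin down the memory set I would trace dependencies at the coordinate $1_K$: the value $(\mathcal{S}\circ\mathcal{T})(x)(1_K)$ depends on $\mathcal{T}(x)|_S$, and each $\mathcal{T}(x)(s)$ depends on $x|_{\phi(s)T}$, so the value depends on $x$ restricted to $\bigcup_{s\in S}\phi(s)T=\phi(S)T$, giving the claimed finite memory set.

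For part (3), the forward direction is immediate, since a $\psi$-cellular automaton inverse is in particular a set-theoretic inverse. For the converse, assume $\mathcal{T}$ is bijective; the homeomorphism remark makes $\mathcal{T}^{-1}$ continuous, so by part (1) it suffices to produce $\psi\in\Hom(G,H)$ for which $\mathcal{T}^{-1}$ is $\psi$-equivariant. The natural candidate is $\psi=\phi^{-1}$, which forces me to prove that $\phi$ is bijective; I expect this to be the main obstacle of the whole theorem. I would handle it by fixed-point counting, viewing $\mathcal{T}$ as an $H$-equivariant bijection from $A^G$, equipped with the action $h\cdot_\phi x:=\phi(h)\cdot x$, onto $A^H$ with its shift. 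Injectivity of $\phi$ then follows from faithfulness of the shift on $A^H$ together with surjectivity of $\mathcal{T}$: if $\phi(h_1)=\phi(h_2)$ then $h_1\cdot\mathcal{T}(x)=h_2\cdot\mathcal{T}(x)$ for all $x$, hence $h_1\cdot y=h_2\cdot y$ for all $y\in A^H$, forcing $h_1=h_2$. Surjectivity of $\phi$ follows because an equivariant bijection preserves fixed points: the shift on $A^H$ has exactly $|A|$ fixed points, whereas the $\cdot_\phi$ action fixes precisely the configurations constant on right cosets of $\phi(H)$, namely $|A|^{[G:\phi(H)]}$ of them, so the equality $|A|^{[G:\phi(H)]}=|A|$ yields $[G:\phi(H)]=1$.

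With $\phi$ bijective in hand, I would rewrite the $\phi$-equivariance of $\mathcal{T}$ as $\mathcal{T}^{-1}(h\cdot y)=\phi(h)\cdot\mathcal{T}^{-1}(y)$, and then substituting $g=\phi(h)$ gives $g\cdot\mathcal{T}^{-1}(y)=\mathcal{T}^{-1}(\phi^{-1}(g)\cdot y)$ for all $g\in G$. Thus $\mathcal{T}^{-1}$ is $\phi^{-1}$-equivariant and continuous, so by part (1) it is a $\phi^{-1}$-cellular automaton, which is the required invertibility.
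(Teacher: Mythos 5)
Your proof is correct, but note that the paper itself contains no proof of Theorem \ref{th-old}: it is explicitly presented as a summary of the main results of \cite{GCA}, so the only internal evidence about the intended arguments is the remark following Lemma \ref{le-phi}. Your reconstruction is consistent with that evidence. Part (1) is the generalized Curtis--Hedlund theorem proved the standard way: the forward direction by coordinate-wise dependence on the finite sets $\phi(h)T$, and the converse by reducing to $\mu_0(x):=\mathcal{T}(x)(1_H)$ and using compactness of $A^G$ (valid since $A$ is finite) to extract a finite memory set; your well-definedness of $\mu$ on $A^T$ does follow from the finite subcover argument, since any $y$ agreeing with $x$ on the union $T$ of the cylinder supports lies in the same cylinder as $x$. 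Part (2) is a routine equivariance chain plus dependency tracing, and the memory-set claim $\phi(S)T$ checks out. The crux of part (3) is the surjectivity of $\phi$, and here your fixed-point counting is precisely the kind of argument the paper alludes to when it says the proof of Lemma \ref{le-phi}(2) in \cite{GCA} ``uses a more elaborate argument involving the periodic configurations of $A^G$ and $A^H$'': the fixed points of your auxiliary action $h\cdot_\phi x=\phi(h)\cdot x$ are exactly the periodic configurations $\Fix(\phi(H))$, and the shift-fixed points of $A^H$ are the $|A|$ constants.

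Two small refinements are worth making explicit. First, the step ``$|A|^{[G:\phi(H)]}=|A|$ yields $[G:\phi(H)]=1$'' should be read as cardinal arithmetic: when the index is an infinite cardinal $\kappa$ one has $|A|^{\kappa}\geq 2^{\aleph_0}>|A|$, so the equality is impossible and the index is forced to be finite, after which $|A|^n=|A|$ with $|A|\geq 2$ gives $n=1$; as written, the infinite-index case deserves a sentence. Second, your argument uses full bijectivity of $\mathcal{T}$ where injectivity suffices: equivariance alone sends fixed points to fixed points, and injectivity of $\mathcal{T}$ already gives $|A|^{[G:\phi(H)]}\leq |A|$, which recovers the sharper statement of Lemma \ref{le-phi}(2) (injective $\Rightarrow$ $\phi$ surjective) rather than only the bijective case needed here --- a harmless but notable weakening relative to the cited lemma. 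With $\phi$ bijective, your passage to $\psi=\phi^{-1}$ and the compact--Hausdorff continuity of $\mathcal{T}^{-1}$ correctly completes part (3) via part (1).
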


Part (2.) of Theorem \ref{th-old}, implies that the set $\GCA(A^G)$, equipped with composition, is a monoid. Part (3.) of Theorem \ref{th-old} heavily depends on the following lemma. 

\begin{lemma}\label{le-phi}
Let $\mathcal{T}  : A^G \to A^H$ be a $\phi$-equivariant function, for some $\phi \in \Hom(H,G)$.
\begin{enumerate}
 \item If $\mathcal{T} $ is surjective, then $\phi$ is injective.
 \item If $\mathcal{T} $ is injective, then $\phi$ is surjective.
 \end{enumerate}
\end{lemma}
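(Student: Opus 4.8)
The plan is to prove both statements by contraposition, using only $\phi$-equivariance together with the finiteness of $A$ (continuity plays no role here). The single tool I would set up first is the pointwise identity obtained by evaluating the equivariance relation $h \cdot \mathcal{T}(x) = \mathcal{T}(\phi(h) \cdot x)$ at the identity $1_H \in H$: writing $F : A^G \to A$ for the map $F(x) := \mathcal{T}(x)(1_H)$, one gets (after replacing $h$ by $h^{-1}$)
\[ \mathcal{T}(x)(h) = \mathcal{T}(\phi(h)^{-1} \cdot x)(1_H) = F(\phi(h)^{-1} \cdot x), \qquad \forall x \in A^G,\ h \in H. \]
This expresses each coordinate of $\mathcal{T}(x)$ through a single function $F$ precomposed with a shift by an element of $\phi(H)$, and it is what drives part (2).

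For part (1) I would assume $\phi$ is not injective and fix $h_0 \in \ker\phi$ with $h_0 \neq 1_H$. Since $\phi(h_0) = 1_G$, equivariance gives $h_0 \cdot \mathcal{T}(x) = \mathcal{T}(1_G \cdot x) = \mathcal{T}(x)$ for every $x$, so every configuration in the image of $\mathcal{T}$ is invariant under the $h_0$-shift. But this shift is not the identity on $A^H$: as $h_0^{-1} \neq 1_H$ and $\{0,1\} \subseteq A$, the configuration $y$ with $y(1_H)=0$ and $y(h_0^{-1})=1$ satisfies $(h_0 \cdot y)(1_H) = y(h_0^{-1}) = 1 \neq y(1_H)$, so $y$ lies outside the image and $\mathcal{T}$ is not surjective.

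For part (2) I would assume $K := \phi(H) \subsetneq G$, so $[G:K] \geq 2$, and let $X \subseteq A^G$ be the set of configurations constant on every right coset $Kg$, equivalently those satisfying $k \cdot x = x$ for all $k \in K$. Because $\phi(h)^{-1} \in K$, the identity above gives $\mathcal{T}(x)(h) = F(\phi(h)^{-1} \cdot x) = F(x)$ for all $h \in H$ whenever $x \in X$; that is, $\mathcal{T}(x)$ is the constant configuration of value $F(x)$. Hence $\mathcal{T}(X)$ is contained in the set of the $|A|$ constant configurations, whereas $|X| = |A|^{[G:K]} \geq |A|^2 > |A|$. Pigeonhole then forces $\mathcal{T}|_X$, and therefore $\mathcal{T}$, to fail injectivity.

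The main obstacle is precisely that $\mathcal{T}$ is assumed only equivariant, not a cellular automaton with a finite memory set, so one cannot reason through a local rule or through continuity; the counting argument circumvents this by using only the global symmetry that equivariance imposes on shift-invariant configurations, together with $|A| < \infty$. The crux is the strict cardinal inequality $|A|^{[G:K]} > |A|$, which I would note holds uniformly whether $[G:K]$ is finite or infinite. The only bookkeeping I would take care with is exactly where the hypotheses $|A| \geq 2$ and $K \neq G$ enter, namely $h_0^{-1} \neq 1_H$ in part (1) and $[G:K] \geq 2$ in part (2).
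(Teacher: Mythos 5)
Your proof is correct and follows essentially the same route as the paper, which defers to \cite{GCA} and describes exactly this strategy: part (1) by a straightforward kernel argument, and part (2) via periodic configurations --- your set $X$ is precisely $\Fix(\phi(H))$, mapped by $\mathcal{T}$ into the $H$-periodic (constant) configurations of $A^H$, with the cardinality comparison $|A|^{[G:\phi(H)]} > |A|$ ruling out injectivity. Your bookkeeping on where $|A| \geq 2$ and the hypotheses enter is accurate, and you are right that continuity plays no role.
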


The converse of Lemma \ref{le-phi} is clearly not true; for example, when $G=H$, there are $\id$-cellular automata that are neither surjective nor injective. As shown in \cite{GCA}, the proof of part (1.) of this lemma is straightforward, while the proof of part (2.) uses a more elaborate argument involving the periodic configurations of $A^G$ and $A^H$. 

For every $\phi \in \Hom(H,G)$, define $\phi^{*} : A^G \to A^H$ by 
\[ \phi^*(x) := x \circ \phi, \quad \forall x \in A^G. \]
Observe that for any $h \in H$,
\[ \phi^*(x)(h) = x \circ \phi(h) = (\phi(h^{-1}) \cdot x)( e ) = \id_A (( \phi(h^{-1}) \cdot x)\vert_{ \{ e \}} ) , \]
where $e$ is the identity of $G$. Hence, $\phi^*$ is a $\phi$-cellular automaton with memory set $\{ e \}$ and local function $\id_A : A^{\{ e \}} \to A$ defined by $\id_A( x) := x(e)$, for all $x \in A^{ \{ e\}}$.

The following result summarizes many of the properties satisfied by $\phi^* : A^G \to A^H$.

\begin{lemma}\label{le-star}
Let $G$ and $H$ be groups and consider $\phi \in \Hom(H,G)$. 
\begin{enumerate}
\item For any $\psi \in \Hom(H,G)$, we have
\[ \phi^{\ast}=\psi^{\ast} \quad \Leftrightarrow \quad \phi=\psi. \]

\item $\phi$ is surjective if and only if $\phi^{\ast}$ is injective. 

\item $\phi$ is injective if and only if $\phi^{\ast}$ is surjective. 

\item For any group $K$ and any $\psi \in \Hom(K,H)$, we have 
\[ (\psi \circ \phi)^* = \phi^* \circ \psi^*. \]
\end{enumerate}
\end{lemma}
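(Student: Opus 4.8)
The plan is to treat all four items as elementary facts about the precomposition map $x \mapsto x \circ \phi$, each of which reduces to a pointwise argument exploiting $|A| \ge 2$ (recall $\{0,1\} \subseteq A$). I would begin with item (4), which carries no content beyond the associativity of composition: for a configuration $x$ in the source space one computes
\[ (\phi \circ \psi)^*(x) = x \circ (\phi \circ \psi) = (x \circ \phi) \circ \psi = \psi^*(\phi^*(x)), \]
so that $\phi \mapsto \phi^*$ reverses the order of composition. The only thing to watch here is the bookkeeping of domains, so that the composite of the homomorphisms and the composite of the induced maps are matched up correctly.

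For item (1), the implication $\phi = \psi \Rightarrow \phi^* = \psi^*$ is immediate from the definition. For the converse I would argue contrapositively: if $\phi(h) \ne \psi(h)$ for some $h \in H$, then since $|A| \ge 2$ I may choose $x \in A^G$ separating the two points $\phi(h), \psi(h) \in G$ — concretely the $\{0,1\}$-valued indicator of $\{\phi(h)\}$ — so that
\[ \phi^*(x)(h) = x(\phi(h)) = 1 \ne 0 = x(\psi(h)) = \psi^*(x)(h), \]
whence $\phi^* \ne \psi^*$.

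Items (2) and (3) are dual, and each splits into an elementary ``constructive'' implication and a ``structural'' converse. For the constructive halves: if $\phi$ is surjective then $x \circ \phi = x' \circ \phi$ forces $x = x'$ on $\phi(H) = G$, giving injectivity of $\phi^*$; and if $\phi$ is injective then any $y \in A^H$ is realized by the $x$ defined by $x(\phi(h)) := y(h)$ on $\phi(H)$ (well defined precisely because $\phi$ is injective) and extended arbitrarily off $\phi(H)$, giving surjectivity of $\phi^*$. The two converses — $\phi^*$ injective $\Rightarrow \phi$ surjective, and $\phi^*$ surjective $\Rightarrow \phi$ injective — are the special cases of Lemma \ref{le-phi} applied to the $\phi$-equivariant map $\mathcal{T} = \phi^*$; alternatively they follow directly by the separation trick of (1), choosing $g_0 \in G \setminus \phi(H)$ in the first case and a pair $h_1 \ne h_2$ with $\phi(h_1) = \phi(h_2)$ in the second.

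I do not expect a genuine obstacle: the content lies entirely in the bookkeeping and in the repeated use of $|A| \ge 2$. The two points needing the most care are the well-definedness of the configuration $x$ constructed in (3), which is exactly where injectivity of $\phi$ enters, and the fact that every ``negative'' direction collapses when $A$ is a singleton — so the standing hypothesis $|A| \ge 2$ is essential to each of (1)--(3), not merely a convenience.
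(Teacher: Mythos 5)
Your proof is correct and follows essentially the same route as the paper's: the paper also proves (1) and the converses of (2)--(3) by separating points with indicator (Kronecker-delta) configurations, uses the same explicit constructions for the direct halves of (2)--(3) (including the well-definedness point where injectivity of $\phi$ enters), and verifies (4) by the same one-line associativity computation; your optional shortcut via Lemma \ref{le-phi} for the two structural converses is valid and non-circular, since that lemma is quoted from prior work, though the paper argues directly instead. One small remark on (4): you state it in the standard composition order as $(\phi \circ \psi)^* = \psi^* \circ \phi^*$ for $\psi \in \Hom(K,H)$, $\phi \in \Hom(H,G)$, which is the type-correct rendering of the paper's $(\psi \circ \phi)^* = \phi^* \circ \psi^*$ --- same content, and your domain bookkeeping is the right way to write it.
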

\begin{proof}
\begin{enumerate}
\item If $\phi = \psi$, it is clear that $\phi^* = \psi^*$. Conversely, assume that $\phi^* = \psi^*$. Hence, for all $x \in A^G$, $h \in H$,
\[ \phi^*(x)(h) = x(\phi(h)) = x(\psi(h)) = \psi^*(x)(h). \]
For each $g \in G$, define $\chi_g \in A^G$ by $\chi_g(k) = \delta_{g,k}$, where $\delta_{g,k}$ is the Kronecker delta function. Therefore, for all $h \in H$,
\[  1 = \chi_{\phi(h)}(\phi(h)) = \chi_{\phi(h)}(\psi(h)). \]
This implies that $\phi(h)=\psi(h)$, for all $h \in H$, so $\phi=\psi$.

\item Suppose that $\phi$ is surjective. Assume that $\phi^*(x) = \phi^*(y)$ for some $x, y \in A^G$. This implies that $x(\phi(h)) = y(\phi(h))$ for all $h \in H$. As $\phi$ is surjective, this implies that $x=y$.

Conversely, suppose that $\phi$ is not surjective, so there exists $g \in G - \text{Im}(\phi)$. Define $x, y \in A^G$ by $x(a) = y(a)$ for all $a \in G - \{g\}$, and $x(g)=1$ and $y(g)=0$. Clearly, $x \neq y$, but for all $h$,
\[ \phi^*(x)(h) = x(\phi(h)) = y(\phi(h)) = \phi^*(y)(h). \] 
Hence, $\phi^*(x) = \phi^*(y)$, which shows that $\phi^*$ is not injective. 

\item Suppose that $\phi$ is injective. We shall show that any $y\in A^{H}$ has a preimage under $\phi^*$. We define $x \in A^G$ as follows 
\[ x (g) := \begin{cases}
y(h) & \text{if } \exists h \in H \text{ such that } g = \phi(h) \\
0    & \text{ otherwise. }
\end{cases} \] 
The function $x : G \to A$ is well-defined as $\phi$ is injective, so every $g \in G$ has at most one preimage in $H$ under $\phi$. Observe that, for any $h \in H$, $\phi^{\ast}(x)(h) = x(\phi(h)) = y(h)$. Therefore, $\phi^*$ is surjective.

Conversely, suppose that $\phi$ is not injective. Then, there exist $h_1, h_2 \in H$, $h_1 \neq h_2$, such that $\phi(h_1) = \phi(h_2)$. This implies that for every $x \in A^G$, 
\[  \phi^*(x)(h_1) = x(\phi(h_1)) = x(\phi(h_2)) = \phi^*(x)(h_2). \]
Therefore, any configuration $y \in A^H$ with $y(h_1) \neq y(h_2)$ is outside the image of $\phi^*$, so the latter is not surjective. 

\item For any $x \in A^G$ and $h \in H$ we have
\[ (\psi \circ \phi)^*(x) = x \circ \psi \circ \phi = (\psi^*(x)) \circ \phi = (\phi^* \circ \psi^* )(x). \] 

\end{enumerate}
\end{proof}

As in the case of classical CA, $\phi$-cellular automata do not have a unique memory set in general. If a $\phi$-cellular automaton $\mathcal{T} : A^G \to A^H$ has memory set $T \subseteq G$ and local function $\mu : A^T \to A$, then, for any finite subset $T^\prime$ of $G$ such that $T \subseteq T^\prime$, we may define $\mu^\prime : A^{T^\prime} \to A$ by $\mu^\prime(x) := \mu(x \vert_T)$, for all $x \in A^{T^\prime}$. The memory set $T^\prime$ and local function $\mu^\prime$ also define $\mathcal{T}$ since
\[  \mu^{\prime}( ( \phi(h^{-1}) \cdot x)\vert_{T^\prime}) =  \mu( ( \phi(h^{-1}) \cdot x)\vert_{T}) =  \mathcal{T}(x)(h), \quad \forall x \in A^G, h \in H. \] 

The following result is analogous to \cite[Lemma 1.5.1]{CSC10}, but we include it here for completeness. 

\begin{lemma}
Let $\mathcal{T} : A^G \to A^H$ be a $\phi$-cellular automaton. If $S_1$ and $S_2$ are memory sets for $\mathcal{T}$, then $S_1 \cap S_2$ is also a memory set for $\mathcal{T}$. Hence, $\mathcal{T}$ admits a unique memory set of minimal cardinality, which is the intersection of all memory sets admitted by $\mathcal{T}$. 
\end{lemma}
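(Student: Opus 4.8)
The plan is to prove the first assertion --- that $S_1 \cap S_2$ is a memory set --- by an explicit splicing of configurations, and then to deduce the "hence" clause by a short minimality argument. Write $S := S_1 \cap S_2$, and let $\mu_1 : A^{S_1} \to A$ and $\mu_2 : A^{S_2} \to A$ be local functions for $\mathcal{T}$ associated with $S_1$ and $S_2$ respectively. The first move is to evaluate the defining equation \eqref{defGCA} at the identity element $1 \in H$: since $\phi(1^{-1}) = \phi(1) = e$ acts trivially on $A^G$, this yields
\[ \mathcal{T}(x)(1) = \mu_1(x\vert_{S_1}) = \mu_2(x\vert_{S_2}), \quad \forall x \in A^G. \]
The goal is to show that this common value depends only on the restriction $x\vert_S$; once established, I can define a local function $\mu : A^S \to A$ and reconstruct all of $\mathcal{T}$ from it by equivariance.

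The heart of the argument is the splicing step. I would take two configurations $x, y \in A^G$ with $x\vert_S = y\vert_S$ and build an intermediate $z \in A^G$ that agrees with $x$ on all of $S_1$ and with $y$ on all of $S_2$ simultaneously. Concretely, set $z(g) := x(g)$ for $g \in S_1$ and $z(g) := y(g)$ for $g \notin S_1$. Then $z\vert_{S_1} = x\vert_{S_1}$ is immediate, and $z\vert_{S_2} = y\vert_{S_2}$ holds precisely because $x$ and $y$ coincide on the overlap $S$: on $S_2 \setminus S_1$ we have $z = y$ by definition, while on $S_2 \cap S_1 = S$ we have $z = x = y$. Chaining the two descriptions of $\mathcal{T}$ at the identity then gives
\[ \mathcal{T}(x)(1) = \mu_1(x\vert_{S_1}) = \mu_1(z\vert_{S_1}) = \mathcal{T}(z)(1) = \mu_2(z\vert_{S_2}) = \mu_2(y\vert_{S_2}) = \mathcal{T}(y)(1). \]
Hence $\mathcal{T}(x)(1)$ is a function of $x\vert_S$ alone, and I define $\mu(w) := \mathcal{T}(x)(1)$ for any $x$ extending $w \in A^S$. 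To confirm that $S$ with local function $\mu$ really defines $\mathcal{T}$, I would note that \eqref{defGCA} evaluated at $1$ gives $\mathcal{T}(x)(h) = \mathcal{T}(\phi(h^{-1}) \cdot x)(1)$ for every $h \in H$, so that $\mathcal{T}(x)(h) = \mu\big((\phi(h^{-1}) \cdot x)\vert_S\big)$, which is exactly the required form.

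For the final claim, let $S_0$ be a memory set of minimal cardinality; such a set exists because every memory set is a finite subset of $G$ and the collection is nonempty. For an arbitrary memory set $S'$, the first part shows that $S_0 \cap S'$ is again a memory set, and $\vert S_0 \cap S' \vert \leq \vert S_0 \vert$ forces $S_0 \cap S' = S_0$ by minimality, i.e. $S_0 \subseteq S'$. Thus $S_0$ is contained in every memory set, so it coincides with the intersection of all of them and is the unique memory set of minimal cardinality.

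I expect the only real point requiring care to be the bookkeeping in the splicing step: verifying that $z$ is genuinely well-defined and that it restricts correctly to $S_1$ and $S_2$ at the same time, where the hypothesis $x\vert_S = y\vert_S$ is exactly what removes any clash on the overlap. Everything else is a faithful translation of the classical proof of \cite[Lemma 1.5.1]{CSC10}; the only $\phi$-specific ingredient is the trivial action of $\phi(1^{-1}) = e$, which is what allows the whole analysis to be carried out at the identity of $H$.
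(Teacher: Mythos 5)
Your proof is correct and follows essentially the same route as the paper's: you evaluate the defining equation at the identity of $H$ (where $\phi$ maps the identity to $e$, which acts trivially), and you splice two configurations agreeing on $S_1 \cap S_2$ into an intermediate $z$ with $z\vert_{S_1} = x\vert_{S_1}$ and $z\vert_{S_2} = y\vert_{S_2}$, exactly as in the paper. Your spelled-out minimality argument for the ``hence'' clause and your explicit verification that $\mathcal{T}(x)(h) = \mathcal{T}(\phi(h^{-1}) \cdot x)(1)$ reconstructs $\mathcal{T}$ from $\mu$ merely make explicit steps the paper leaves implicit.
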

\begin{proof}
Let $\mu_1 : A^{S_1} \to A$ and $\mu_2 : A^{S_2} \to A$ be the local functions defining $\mathcal{T}$. This implies that
\begin{equation} \label{mu}
 \mu_1(x \vert_{S_1}) = \mathcal{T}(x)(e) = \mu_2(x \vert_{S_2}), \quad \forall x \in A^G. 
 \end{equation}
Define $\mu : A^{S_1 \cap S_2} \to A$ by 
\[ \mu(x) := \mu_1(\hat{x} \vert_{S_1}), \quad \forall x \in A^{S_1 \cap S_2}, \]
where $\hat{x} \in A^{G}$ is any extension of $x \in A^{S_1 \cap S_2}$. To see that $\mu$ this is well-defined, let $\hat{x}_1, \hat{x}_2 \in A^{G}$ be such that $\hat{x}_1 \vert_{S_1 \cap S_2} = \hat{x}_2 \vert_{S_1 \cap S_2}$. Define $z \in A^{G}$ as follows
\[  z(g) := \begin{cases}
\hat{x}_1(g) & \text{ if } g \in S_1\\
\hat{x}_2(g) & \text{ if } g \in G \setminus S_1
\end{cases}\] 
Note that $z \vert_{S_1} = \hat{x}_1 \vert_{S_1}$ and $z \vert_{S_2} = \hat{x}_2 \vert_{S_2}$. By (\ref{mu}), we have
\[ \mu_1(\hat{x}_1 \vert_{S_1}) =\mu_1(z \vert_{S_1}) = \mu_2(z \vert_{S_2}) = \mu_2(\hat{x}_2 \vert_{S_2}) = \mu_1(\hat{x}_2 \vert_{S_1}) .  \]
Clearly, $\mu : A^{S_1 \cap S_2} \to A$ is a local function defining $\mathcal{T}$, so $S_1 \cap S_2$ is a memory set for $\mathcal{T}$
 \end{proof}
 
 The unique memory set of minimal cardinality of $\mathcal{T}$ is called the \emph{minimal memory set} of $\mathcal{T}$. 

\begin{lemma}\label{lemma:GCA-tau_phi}
For any $\phi$-cellular automaton $\mathcal{T} : A^G \to A^H$ there exists a unique $\tau \in \CA(A^G)$ such that 
\[ \mathcal{T}= \phi^* \circ \tau, \]
i.e., the following diagram commutes
\[
\begin{tikzcd}
 & A^G \arrow[dr,"\phi^*"] \\
A^G \arrow[ur,"\tau"] \arrow[rr,"\mathcal{T}"] && A^H
\end{tikzcd}
\]
Furthermore, the minimal memory set of $\mathcal{T}$ is equal to the minimal memory set of $\tau$. 
\end{lemma}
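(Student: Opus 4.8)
The plan is to establish existence by an explicit construction, uniqueness by exploiting $\id$-equivariance, and then the memory-set assertion by matching local functions.

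For existence, let $T \subseteq G$ be a memory set of $\mathcal{T}$ with local function $\mu : A^T \to A$, and let $\tau \in \CA(A^G)$ be the $\id$-cellular automaton defined by the \emph{same} memory set $T$ and the \emph{same} local function $\mu$, so that $\tau(x)(g) = \mu((g^{-1}\cdot x)\vert_T)$. Composing with $\phi^*$ gives $(\phi^* \circ \tau)(x)(h) = \tau(x)(\phi(h)) = \mu((\phi(h)^{-1}\cdot x)\vert_T)$, and since $\phi$ is a homomorphism we have $\phi(h)^{-1} = \phi(h^{-1})$, so this equals $\mathcal{T}(x)(h)$. Hence $\mathcal{T} = \phi^* \circ \tau$.

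Uniqueness is the step requiring care: $\phi^*$ need not be injective (by Lemma \ref{le-star}(3) it is injective only when $\phi$ is surjective), so one cannot simply cancel $\phi^*$. Instead I would use that every $\id$-cellular automaton is determined by its value at the identity $e \in G$: by $\id$-equivariance, $\tau(x)(g) = (g^{-1}\cdot \tau(x))(e) = \tau(g^{-1}\cdot x)(e)$ for all $g \in G$. Now if $\mathcal{T} = \phi^* \circ \tau_1 = \phi^* \circ \tau_2$, then $\tau_1(x)(\phi(h)) = \tau_2(x)(\phi(h))$ for all $x \in A^G$ and $h \in H$; taking $h = 1_H$ (the identity of $H$) and using $\phi(1_H) = e$ yields $\tau_1(x)(e) = \tau_2(x)(e)$ for every $x$. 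Combining this with the displayed formula gives $\tau_1(x)(g) = \tau_1(g^{-1}\cdot x)(e) = \tau_2(g^{-1}\cdot x)(e) = \tau_2(x)(g)$ for all $g$, so $\tau_1 = \tau_2$. The crux is precisely that $e$ always lies in the image of $\phi$, which lets the single identity value propagate everywhere by equivariance even when $\phi$ is far from surjective.

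For the memory-set claim, I would show that a finite set $S \subseteq G$ is a memory set of $\mathcal{T}$ with local function $\nu$ if and only if $S$ is a memory set of $\tau$ with the same local function $\nu$. The identity $\mathcal{T}(x)(h) = \tau(x)(\phi(h))$ handles one direction directly: if $\tau(x)(g) = \nu((g^{-1}\cdot x)\vert_S)$, then $\mathcal{T}(x)(h) = \nu((\phi(h^{-1})\cdot x)\vert_S)$, as in the existence step. For the converse, evaluating at $h = 1_H$ gives $\tau(x)(e) = \tau(x)(\phi(1_H)) = \mathcal{T}(x)(1_H) = \nu(x\vert_S)$, and then equivariance propagates this via $\tau(x)(g) = \tau(g^{-1}\cdot x)(e) = \nu((g^{-1}\cdot x)\vert_S)$. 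Thus $\mathcal{T}$ and $\tau$ admit exactly the same memory sets (with the same local functions); since by the preceding lemma the minimal memory set is the intersection of all memory sets, the minimal memory sets of $\mathcal{T}$ and $\tau$ coincide. The only genuinely delicate point in the whole argument is the uniqueness, and its resolution is the reduction of a CA to its value at $e$.
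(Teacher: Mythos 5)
Your proposal is correct and follows essentially the same route as the paper: the same explicit construction of $\tau$ from the memory set and local function of $\mathcal{T}$, and a uniqueness argument that, like the paper's, hinges on evaluating $\phi^* \circ \tau$ at the identity of $H$ (so that $\phi(1_H)=e$) and propagating the value at $e$ to all of $G$ by $\id$-equivariance. Your treatment of the memory-set claim merely spells out in more detail the equivalence the paper states in one line, so there is nothing substantively different to flag.
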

\begin{proof}
Let $T$ be a memory set of $\mathcal{T}$ and let $\mu:A^T\to A$ be the local function of $\mathcal{T}$. Consider the $\id$-cellular automaton $\tau: A^{G}\to A^{G}$ defined by $\mu : A^T \to A$ as local function: 
\[ \tau (x)(g) := \mu((g^{-1} \cdot x)|_{T}), \]
for all $x\in A^{G}$ and $g\in G$. Observe that 
\[ \phi^\ast \circ \tau(x)(h) = \tau(x)(\phi(h)) =  \mu((\phi(h)^{-1} \cdot x)|_{T}) = \mathcal{T}(x)(h), \]
for all $x \in A^G$, $h \in H$. Therefore, $\mathcal{T}= \phi^* \circ \tau$. The above equality implies that $T$ is a memory set of $\mathcal{T}$ if and only if $T$ is a memory set of $\tau$, so they must have the same minimal memory set. 

Finally, in order to show the uniqueness of $\tau$, let $\sigma \in \CA(A^G)$ be such that $\phi^* \circ \tau = \phi^{*} \circ \sigma$. Let $S$ be a memory set of $\sigma$ and let $\mu^\prime : A^S \to A$ be the local function of $\sigma$. Then, for all $x \in A^G$,
\[ \mu(x \vert_T) = \phi^* \circ \tau(x)(e) = \phi^* \circ \sigma(x)(e)= \mu^\prime(x \vert_{S}). \]
This implies that for all $x \in A^G$, $g \in G$, 
\[ \tau(x)(g) = \mu((g^{-1} \cdot x)\vert_T) = \mu^\prime((g^{-1} \cdot x)\vert_S) = \sigma(x)(g). \]
Therefore, $\tau = \sigma$. 
\end{proof}

A group $G$ is \emph{surjunctive} if, for every finite set $A$, every injective cellular automaton $\tau : A^G \to A^G$ is surjective (\cite[Sec. 3.1]{CSC10}). The famous Gottschalk conjecture states that all groups are surjunctive; one of the most important steps towards proving it is the Gromov-Weiss Theorem \cite[Theorem 7.8.1.]{CSC10}, which establishes that every sofic group is surjunctive. The following is a natural analog of surjunctivity for GCA. 

\begin{definition}
A group $G$ is \emph{GCA-surjunctive} if, for every finite set $A$, every injective $\mathcal{T} \in \GCA(A^G)$ is surjective. 
\end{definition}

Recall that a group $G$ is \emph{Hopfian} is every surjective $\phi \in \End(G)$ is injective \cite[Sec. 2.4]{CSC10}. Examples of Hopfian groups are finitely generated residually finite groups (e.g. $G$ is $\mathbb{Z}^d$, with $d \geq 1$, or $G$ is a free group $F_n$, with $n \geq 1$) and the additive group $\mathbb{Q}$ \cite[Example 2.4.2.(b)]{CSC10}. Groups that are known to be non-Hopfian are the additive group $\mathbb{R}$ \cite[Exercise 2.23]{ECAG} and the Baumslag-Solitar group $B(2,3)$ (see \cite{BS62}, or \cite[Exercise 2.13]{ECAG} for a self-contained presentation).

\begin{proposition}
A group $G$ is GCA-surjunctive if and only if $G$ is Hopfian and surjunctive. 
\end{proposition}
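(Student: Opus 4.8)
The plan is to reduce both implications to the factorization $\mathcal{T} = \phi^* \circ \tau$ furnished by Lemma \ref{lemma:GCA-tau_phi}, and then to translate injectivity and surjectivity back and forth between $\mathcal{T}$, $\phi^*$, and $\phi$ using Lemmas \ref{le-phi} and \ref{le-star}. Essentially every statement about $\mathcal{T}$ can be split into a ``classical CA'' part (handled by $\tau$ and surjunctivity) and a ``homomorphism'' part (handled by $\phi$ and the Hopfian property), so the equivalence should fall out by assembling the available correspondences.

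For the ``if'' direction I would suppose $G$ is Hopfian and surjunctive and take an injective $\mathcal{T} \in \GCA(A^G)$, say a $\phi$-cellular automaton for some $\phi \in \End(G)$. First, by Lemma \ref{le-phi}(2), injectivity of $\mathcal{T}$ forces $\phi$ to be surjective; since $G$ is Hopfian, $\phi$ is then bijective, and by Lemma \ref{le-star}(2)--(3) the map $\phi^*$ is both injective and surjective. Writing $\mathcal{T} = \phi^* \circ \tau$ with $\tau \in \CA(A^G)$, injectivity of $\mathcal{T}$ immediately yields injectivity of $\tau$, so surjunctivity of $G$ makes $\tau$ surjective; as $\phi^*$ is surjective too, the composite $\mathcal{T}$ is surjective. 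Hence $G$ is GCA-surjunctive.

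For the ``only if'' direction I would assume $G$ is GCA-surjunctive. Surjunctivity is immediate, since any injective $\tau \in \CA(A^G)$ is an injective $\id$-cellular automaton and hence lies in $\GCA(A^G)$, forcing it to be surjective. For the Hopfian property I would take a surjective $\phi \in \End(G)$; by Lemma \ref{le-star}(2) the map $\phi^* : A^G \to A^G$ is injective, and it belongs to $\GCA(A^G)$ as a $\phi$-cellular automaton with memory set $\{e\}$. GCA-surjunctivity then makes $\phi^*$ surjective, which by Lemma \ref{le-star}(3) is equivalent to $\phi$ being injective; thus every surjective endomorphism of $G$ is injective and $G$ is Hopfian.

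Both implications are short once this machinery is in place, so I do not expect a genuine obstacle; the one point deserving care is verifying that the Hopfian hypothesis is actually used in the ``if'' direction. Without it one cannot upgrade $\phi$ from surjective to bijective, so $\phi^*$ need not be surjective and the composite $\phi^* \circ \tau$ could fail to be onto even when $\tau$ is surjective. The non-Hopfian examples already mentioned (such as $\mathbb{R}$ or $B(2,3)$) should confirm the hypothesis is not superfluous. No topological or periodic-point argument is needed directly, since the behaviour of $\phi^*$ is controlled exactly by $\phi$ through Lemma \ref{le-star}.
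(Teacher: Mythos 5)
Your proposal is correct and follows essentially the same route as the paper's proof: both directions rest on the factorization $\mathcal{T} = \phi^* \circ \tau$ from Lemma \ref{lemma:GCA-tau_phi}, with Lemma \ref{le-phi}(2) extracting surjectivity of $\phi$, Lemma \ref{le-star} translating between properties of $\phi$ and $\phi^*$, and the Hopfian and surjunctivity hypotheses used exactly as in the paper. The only difference is cosmetic ordering of steps in the ``if'' direction, so there is nothing to flag.
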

\begin{proof}
Suppose first that $G$ is GCA-surjunctive. It is clear that $G$ is surjunctive because every classical cellular automaton is an $\id$-cellular automaton. Let $\phi \in \End(G)$ be surjective. By Lemma \ref{le-star} (2.), $\phi^* \in \GCA(A^G)$ is injective, so it must be also surjective as $G$ is GCA-surjunctive. Now by Lemma \ref{le-star} (3.), $\phi$ is injective, which shows that $G$ is Hopfian. 

Suppose now that $G$ is Hopfian and surjunctive. For any finite set $A$, let $\mathcal{T}  : A^G \to A^G$ be an injective $\phi$-cellular automaton, with $\phi \in \End(G)$. If $\vert A \vert \leq 1$, $\mathcal{T}$ is trivially surjective, so assume that $\vert A \vert \geq 2$. By Lemma \ref{lemma:GCA-tau_phi}, there exists $\tau \in \CA(A^G)$ such that $\mathcal{T} = \phi^* \circ \tau$. Hence, the injectivity of $\mathcal{T} $ implies the injectivity of $\tau$. Now, $\tau$ is surjective by the surjunctivity of $G$. Lemma \ref{le-phi} (2.) implies that $\phi$ is surjective, so $\phi$ must be injective because $G$ is Hopfian. Therefore, $\phi^*$ and $\tau$ are both bijective, so $\mathcal{T} $ is bijective (and in particular, surjective). This shows that $G$ is GCA-surjunctive.
\end{proof}

Recall that a subgroup $K$ of $G$ is \emph{fully invariant} if $K$ is $\phi$-invariant (i.e., $\phi(K) \subseteq K$) for all $\phi \in \End(G)$. For example, it is well-known that every subgroup of a cyclic group is fully invariant, and that the commutator subgroup of any group is fully invariant. We finish this section with a characterization of fully invariant subgroups of $G$ in terms of GCA. 

\begin{proposition}
Let $K$ be a subgroup of $G$. The set
\[ \GCA_K (A^G) := \{ \mathcal{T}  \in \GCA(A^G) : \text{the minimal memory set of }\mathcal{T} \text{ is contained in }   K \} \]  
is a submonoid of $\GCA(A^G)$ if and only if $K$ is fully invariant.
\end{proposition}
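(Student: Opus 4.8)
The plan is to prove both implications directly, using the memory‑set formula for composition from Theorem \ref{th-old}(2). Two standing observations streamline everything. First, $\id_{A^G}$ is an $\id$-cellular automaton with minimal memory set $\{e\} \subseteq K$, so $\id_{A^G} \in \GCA_K(A^G)$ automatically; hence in both directions the only substantive content of being a submonoid is closure under composition. Second, the minimal memory set of any $\mathcal{T} \in \GCA(A^G)$ is the intersection of all memory sets of $\mathcal{T}$, so it is contained in every memory set, and it consists exactly of the coordinates on which the map $x \mapsto \mathcal{T}(x)(e)$ essentially depends.

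For the direction ($\Leftarrow$), I would assume $K$ is fully invariant and take $\mathcal{T}, \mathcal{S} \in \GCA_K(A^G)$, where $\mathcal{T}$ is $\phi$-equivariant with minimal memory set $T \subseteq K$ and $\mathcal{S}$ is $\psi$-equivariant with minimal memory set $S \subseteq K$. By Theorem \ref{th-old}(2), $\mathcal{S} \circ \mathcal{T}$ admits the memory set $\phi(S)T$. Since $\phi \in \End(G)$ and $K$ is fully invariant, $\phi(S) \subseteq \phi(K) \subseteq K$, and as $K$ is a subgroup, $\phi(S)T \subseteq K$. Therefore the minimal memory set of $\mathcal{S} \circ \mathcal{T}$, being contained in $\phi(S)T$, lies in $K$, so $\mathcal{S} \circ \mathcal{T} \in \GCA_K(A^G)$ and $\GCA_K(A^G)$ is a submonoid.

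For the converse ($\Rightarrow$), I would assume $\GCA_K(A^G)$ is a submonoid and aim to show $\phi(k) \in K$ for every $\phi \in \End(G)$ and every $k \in K$, which is precisely full invariance. The strategy is to exhibit two elements of $\GCA_K(A^G)$ whose composite has minimal memory set exactly $\{\phi(k)\}$. I take $\phi^* \in \GCA(A^G)$, which has minimal memory set $\{e\} \subseteq K$, and the $\id$-cellular automaton $\sigma$ defined by $\sigma(x)(g) := x(gk)$; since $|A| \geq 2$, the map $x \mapsto \sigma(x)(e) = x(k)$ depends genuinely and only on the coordinate $k$, so $\sigma$ has minimal memory set $\{k\} \subseteq K$. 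Thus $\phi^*, \sigma \in \GCA_K(A^G)$, whence $\sigma \circ \phi^* \in \GCA_K(A^G)$. Computing, $(\sigma \circ \phi^*)(x)(e) = \phi^*(x)(k) = x(\phi(k))$, which depends nontrivially and only on the coordinate $\phi(k)$; hence the minimal memory set of $\sigma \circ \phi^*$ is $\{\phi(k)\}$, and membership in $\GCA_K(A^G)$ forces $\phi(k) \in K$.

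I expect the main obstacle to lie in the converse, specifically in verifying that $\sigma \circ \phi^*$ has minimal memory set equal to $\{\phi(k)\}$ and not a proper subset (which would occur only if the composite degenerated to a constant); this is exactly why I choose $\sigma$ with an evaluation local rule and invoke $|A| \geq 2$, guaranteeing that $\phi(k)$ is a genuinely essential coordinate. A second point requiring care is the order of composition: with $\phi^*$ innermost, the factor $\phi(S)$ in the formula $\phi(S)T$ applies $\phi$ to the memory set $\{k\}$ of the outer map $\sigma$, producing $\phi(k)$; the reversed composite $\phi^* \circ \sigma$ would merely reproduce the memory set $\{k\}$ and reveal nothing about $\phi(k)$.
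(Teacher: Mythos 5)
Your proof is correct and takes essentially the same route as the paper's: the same memory-set formula from Theorem \ref{th-old}(2) for closure under composition, and for the converse the same witnesses, namely $\sigma(x)(g) := x(gk)$ composed with $\phi^*$ in the same order $\sigma \circ \phi^*$, yielding the memory set $\{\phi(k)\}$. If anything, you are slightly more careful than the paper, which leaves implicit both that $\id_{A^G} \in \GCA_K(A^G)$ and that the composite's minimal memory set is exactly $\{\phi(k)\}$ rather than empty (i.e., that the composite is non-constant, which your appeal to $|A| \geq 2$ settles).
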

\begin{proof}
Suppose that $\GCA_K(A^G)$ is a submonoid. Let $\phi \in \End(G)$ and let $\tau \in \CA(A^G)$ be with minimal memory set $\{ k \}$, for $k \in K$ (e.g., take $\tau(x)(g):=x(gk)$, for all $x \in A^G$, $g \in G$). By Theorem \ref{th-old} (2.),  $\tau \circ \phi^*$ is a $\phi$-cellular automaton with memory set $\phi(\{ k \}) = \{ \phi(k) \}$. By hypothesis, we have $\tau \circ \phi^* \in \GCA_K(A^G)$, so $\phi(k) \in K$. As $\phi$ and $k$ were arbitrary, it follows that $K$ is fully invariant.

Conversely, assume that $K$ is fully invariant. Let $\mathcal{T}$ and $\mathcal{S}$ be a $\phi$- and $\psi$-cellular automaton with memory sets $T \subseteq K$ and $S \subseteq K$, respectively. By Theorem \ref{th-old} (2.), $\mathcal{T} \circ \mathcal{S}$ has memory set $\phi(T)S$. By $\phi$-invariance, $\phi(T) \subseteq K$, so $\phi(T)S \subseteq K$. This shows that $\mathcal{T} \circ \mathcal{S} \in \GCA_K(A^G)$, and $\GCA_K(A^G)$ is a submonoid. 
\end{proof}



\section{Homomorphisms of GCA}\label{Section:Homomorphisms}

A GCA $\mathcal{T} : A^G \to A^H$ may be a $\phi$-cellular automaton and a $\psi$-cellular automaton for different $\phi, \psi \in \Hom(H,G)$. Constant GCA are an extreme example of this situation. 

\begin{lemma}\label{le-constant}
Let $\mathcal{T} : A^G \to A^H$ be a constant $\phi$-cellular automaton. Then $\mathcal{T}$ is a $\psi$-cellular automaton for every $\psi \in \Hom(H,G)$. 
\end{lemma}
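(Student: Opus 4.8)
The plan is to reduce $\psi$-equivariance of a constant map to a single shift-invariance condition on its output value, and then to verify that this condition is automatically satisfied because $\mathcal{T}$ is already assumed to be $\phi$-equivariant. First I would write $\mathcal{T}(x) = c$ for the fixed configuration $c \in A^H$ taken by the constant map $\mathcal{T}$, independently of $x \in A^G$.

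The key observation is that for a constant map, $\phi$-equivariance says nothing about $\phi$ at all: substituting $\mathcal{T}(x) = c$ and $\mathcal{T}(\phi(h) \cdot x) = c$ into the equivariance identity $h \cdot \mathcal{T}(x) = \mathcal{T}(\phi(h) \cdot x)$ yields $h \cdot c = c$ for every $h \in H$. Thus being a $\phi$-cellular automaton forces $c$ to be a fixed point of the shift action of $H$ on $A^H$; unwinding the definition of the shift, $c(h^{-1}k) = c(k)$ for all $h,k \in H$, so $c$ is the constant configuration $c \equiv a_0$ for some $a_0 \in A$. The point is that $h \cdot c = c$ is exactly the condition that $\psi$-equivariance also demands, with no reference to the particular homomorphism.

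With this in hand, fix an arbitrary $\psi \in \Hom(H,G)$. For any $x \in A^G$ and $h \in H$ we have $\mathcal{T}(\psi(h) \cdot x) = c = h \cdot c = h \cdot \mathcal{T}(x)$, so $\mathcal{T}$ is $\psi$-equivariant. Since every constant map is continuous in the prodiscrete topologies, Theorem \ref{th-old} (1.) (the generalized Curtis--Hedlund theorem) lets me conclude that $\mathcal{T}$ is a $\psi$-cellular automaton. Alternatively, and more explicitly, I could argue at the level of local functions: since $(\phi(h^{-1}) \cdot x)\vert_T$ ranges over all of $A^T$ as $x$ varies, constancy of $\mathcal{T}$ forces the local function $\mu : A^T \to A$ to be constant, $\mu \equiv a_0$; but then the same memory set $T$ and the same $\mu$ define $\mathcal{T}$ through formula (\ref{defGCA}) for any $\psi$, since the value $a_0$ does not depend on the homomorphism used.

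I do not expect a genuine obstacle here: the lemma is easy once one notices that equivariance of a constant map collapses to shift-invariance of its single output value, a condition blind to the homomorphism. The only mild subtlety worth stating cleanly is the step extracting that the output $c$ is a constant configuration (equivalently, that the local function is constant), which is where the hypothesis that $\mathcal{T}$ is genuinely a GCA, and not merely a constant set-map, is used.
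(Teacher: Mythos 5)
Your proposal is correct, and your primary argument takes a genuinely different route from the paper's; interestingly, the "alternative, more explicit" argument you sketch at the end is precisely the paper's proof. The paper argues at the level of local rules: constancy of $\mathcal{T}$ forces the local function $\mu : A^T \to A$ to be constant (since restriction to $T$ is surjective on configurations), after which the defining formula $\mathcal{T}(x)(h) = \mu\bigl((\psi(h^{-1}) \cdot x)\vert_T\bigr)$ holds verbatim for every $\psi \in \Hom(H,G)$ with the same memory set and local rule, so the definition of a $\psi$-cellular automaton is verified directly with no appeal to the Curtis--Hedlund characterization. Your main route instead works at the level of equivariance: you note that for a constant map with value $c$, $\phi$-equivariance collapses to the shift-invariance $h \cdot c = c$ for all $h \in H$ (which in particular forces $c$ to be a constant configuration $c \equiv a_0$), that this condition makes no reference to the homomorphism, hence $\mathcal{T}$ is $\psi$-equivariant for every $\psi$, and you then invoke continuity of constant maps together with Theorem \ref{th-old} (1.). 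Both arguments are complete and all your intermediate steps check out. The paper's version buys elementarity and the explicit bonus that the memory set and local function are unchanged across all $\psi$; yours buys a cleaner conceptual explanation (equivariance of a constant map is blind to the homomorphism) and the structural byproduct $c \equiv a_0$ extracted from equivariance alone, at the mild cost of invoking the generalized Curtis--Hedlund theorem where a direct verification of the definition suffices.
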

\begin{proof}
Let $T$ be a memory set and let $\mu : A^T \to A$ be the local function of $\mathcal{T}$. As $\mathcal{T}$ is constant, then $\mu$ must be constant, so for any $\psi \in \Hom(H,G)$, $h \in H$, $x \in A^G$,  
\[ \mathcal{T}(x)(h) = \mu( (\phi(h) \cdot x) \vert_T)  = \mu( (\psi(h) \cdot x) \vert_T).  \]
The result follows. 
\end{proof}

By Lemma \ref{lemma:GCA-tau_phi}, if $\mathcal{T}$ is a $\phi$- and $\psi$-cellular automaton, there exist $\tau, \sigma \in \CA(A^G)$ such that 
\[ \mathcal{T} = \phi^* \circ \tau = \psi^* \circ \sigma.  \]
As $\tau$ and $\sigma$ are defined by using the local function of $\mathcal{T}$, we must have $\tau = \sigma$. Therefore, in this section we are interested in the following question: for $\tau \in \CA(G;A)$ and $\phi, \psi \in \Hom(H,G)$, when does $\phi^* \circ \tau = \psi^* \circ \tau$ imply $\phi = \psi$?

\begin{definition}
We say that $\mathcal{T} \in \GCA(A^G,A^H)$ has the \emph{unique homomorphism property} (UHP) if $\mathcal{T} =\phi^* \circ \tau = \psi^* \circ \tau$, for $\phi, \psi \in \Hom(H,G)$, $\tau \in \CA(A^G)$, implies $\phi = \psi$. 
 \end{definition}

The following result is \cite[Lemma 9]{GCA}, but we shall include it here for completeness.

\begin{lemma}\label{le-q2-1}
Every injective $\mathcal{T} \in \GCA(A^G,A^H)$ has the UHP.
\end{lemma}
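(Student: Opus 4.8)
The plan is to show that if $\mathcal{T}$ is injective and $\mathcal{T} = \phi^* \circ \tau = \psi^* \circ \tau$ for some $\phi, \psi \in \Hom(H,G)$ and $\tau \in \CA(A^G)$, then necessarily $\phi = \psi$. The key structural fact to exploit is Lemma \ref{le-phi}(2.): an injective $\phi$-equivariant function forces the homomorphism to be surjective. Since $\mathcal{T}$ is injective and it is both $\phi$-equivariant and $\psi$-equivariant, I expect both $\phi$ and $\psi$ to be surjective.

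First I would record that $\mathcal{T}$ injective implies $\tau$ injective. Indeed, from $\mathcal{T} = \phi^* \circ \tau$, if $\tau(x) = \tau(y)$ then $\mathcal{T}(x) = \mathcal{T}(y)$, whence $x = y$. By Lemma \ref{le-star}(3.) applied in the contrapositive, or more directly by Lemma \ref{le-phi}(2.) applied to the $\phi$-equivariant injective map $\mathcal{T}$, I conclude $\phi$ is surjective; the identical argument with $\psi$ gives that $\psi$ is surjective. By Lemma \ref{le-star}(2.), surjectivity of $\phi$ and of $\psi$ means $\phi^*$ and $\psi^*$ are both \emph{injective}. This is the crucial leverage.

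Now I would use the factorization equality $\phi^* \circ \tau = \psi^* \circ \tau$ together with the injectivity of $\tau$. Since $\tau$ is injective, its image $\tau(A^G)$ is a nonempty subset of $A^G$, and on this image the two injective maps $\phi^*$ and $\psi^*$ agree. The goal is to upgrade ``$\phi^*$ and $\psi^*$ agree on $\tau(A^G)$'' to ``$\phi^* = \psi^*$ on all of $A^G$'', at which point Lemma \ref{le-star}(1.) immediately yields $\phi = \psi$ and completes the proof. The main obstacle is precisely this upgrade: $\tau(A^G)$ need not be all of $A^G$ (an injective CA need not be surjective in general), so agreement on the image does not formally give agreement everywhere without further argument.

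To overcome this, I would argue at the level of individual coordinates rather than globally. Fix $h \in H$; the equality $\phi^*(\tau(x))(h) = \psi^*(\tau(x))(h)$ reads $\tau(x)(\phi(h)) = \tau(x)(\psi(h))$ for all $x \in A^G$. Writing $\tau$ through its local rule, this becomes an equality of values of the local function on shifted, restricted configurations, and I would select configurations $x$ realizing prescribed patterns on the relevant finite window (using the fact that the shift action is transitive and $A$ has at least two symbols) to force $\phi(h) = \psi(h)$ directly. Concretely, if $\phi(h) \neq \psi(h)$ one can build $x$ so that $\tau(x)$ distinguishes these two group elements, contradicting injectivity of $\mathcal{T}$: since $\mathcal{T}(x) = \tau(x) \circ \phi$ and distinct configurations in $A^G$ map to distinct configurations in $A^H$, the pointwise coincidence $\tau(x)(\phi(h)) = \tau(x)(\psi(h))$ for \emph{all} $x$ is incompatible with $\phi(h) \neq \psi(h)$ once we exhibit a single $x$ separating those two coordinates. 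As $h$ was arbitrary, $\phi = \psi$ follows.
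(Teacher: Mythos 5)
Your opening reductions are correct ($\mathcal{T}$ injective $\Rightarrow$ $\tau$ injective; $\phi,\psi$ surjective by Lemma \ref{le-phi}; $\phi^*,\psi^*$ injective by Lemma \ref{le-star}), but they never feed into the conclusion, and the step where all the work happens is a genuine gap. From $\phi^*\circ\tau=\psi^*\circ\tau$ you correctly derive, for fixed $h\in H$, that $\tau(x)(\phi(h))=\tau(x)(\psi(h))$ for all $x\in A^G$, and you then claim that if $\phi(h)\neq\psi(h)$ one can ``build $x$ so that $\tau(x)$ distinguishes these two group elements'' by prescribing patterns on the relevant finite windows. Prescribing $x$ on the windows $\phi(h)T$ and $\psi(h)T$ (with $T$ a memory set and $\mu$ a non-constant local rule) forces $\tau(x)(\phi(h))=\mu(z_1)\neq\mu(z_2)=\tau(x)(\psi(h))$ only when the two windows are \emph{disjoint}; when $\phi(h)T\cap\psi(h)T\neq\emptyset$ the two patterns cannot be prescribed independently, and the identity can genuinely hold for all $x$ even with $\phi(h)\neq\psi(h)$: Proposition \ref{prop} exhibits exactly this situation (symmetric $\mu$ with $\psi(h)^{-1}\phi(h)T=T$). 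This overlapping case is precisely the hard one: by Lemma \ref{le-power}, disjoint windows for some $h$ exist only when $\Delta(\phi,\psi)$ is infinite, which is the hypothesis of Theorem \ref{th-main}, not of this lemma --- so your sketch essentially reproves Theorem \ref{th-main} rather than Lemma \ref{le-q2-1}. Note also that your window construction uses only non-constancy of $\mu$, never injectivity; yet injectivity is what must exclude the symmetric-rule counterexamples, and it cannot do so through coordinate separation on the image, since mere distinctness of outputs is compatible with all outputs agreeing at two fixed coordinates (distinct outputs can differ elsewhere). Ruling that out for arbitrary groups amounts to showing the full shift does not embed in a proper coordinate-identified subshift, which is delicate, surjunctivity-flavored territory --- nothing in the paper (or in your sketch) supplies it.

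The missing idea is to apply injectivity of $\mathcal{T}$ \emph{upstream}, on the arguments, rather than downstream on values of $\tau(x)$. Since $\mathcal{T}$ is both $\phi$- and $\psi$-equivariant, for all $h\in H$ and $x\in A^G$,
\[ \mathcal{T}(\phi(h)\cdot x) \;=\; h\cdot\mathcal{T}(x) \;=\; \mathcal{T}(\psi(h)\cdot x), \]
so injectivity of $\mathcal{T}$ yields $\phi(h)\cdot x=\psi(h)\cdot x$ for \emph{all} $x\in A^G$, and faithfulness of the shift action (valid because $|A|\geq 2$) forces $\phi(h)=\psi(h)$ for every $h$. This is the paper's proof: it needs no separating configuration, no analysis of windows, and none of your preliminary reductions. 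Your proposal, by never invoking the equivariance relations to move $\phi(h)$ and $\psi(h)$ to the input side of $\mathcal{T}$, cannot be completed along the route you chose.
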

\begin{proof}
Suppose that $\mathcal{T} = \phi^* \circ \tau = \psi^* \circ \tau$ for some $\phi, \psi \in \Hom(H,G)$, and $\tau \in \CA(G;A)$. Then $\mathcal{T}$ is both $\phi$- and $\psi$-equivariant, so for all $h \in H$, $x \in A^G$,
\[ \tau( \phi(h) \cdot x) = h \cdot \tau(x) = \tau( \psi(h) \cdot x).  \]
As $\tau$ is injective, then $\phi(h) \cdot x = \psi(h) \cdot x$ for all $h \in H$, $x \in A^G$. As $\vert A \vert \geq 2$, the shift action is faithful (see \cite[2.7.2]{CSC10}), so we have $\phi(h) = \psi(h)$ for all $h \in H$. Then $\phi = \psi$ and the result follows. 
\end{proof}

It is obvious that if $\tau \in \CA(A^G)$ is surjective, then, for every $\phi \in \Hom(H,G)$, $\mathcal{T} = \phi^* \circ \tau$ has the UHP because $\tau$ is right-cancellative. In order to generalize this criterion for UHP, we shall introduce some notation. 

\begin{definition}
We say that $\chi_{g}^a \in A^G$ is an \emph{$a$-characteristic function} on $g \in G$ and $a \in A$ if 
\[ \chi_{g}^a(h)=a  \ \Leftrightarrow \ h=g. \]  
\end{definition}

\begin{example}
Let $G= \mathbb{Z}$. We represent a configuration $x \in A^\mathbb{Z}$ as the bi-infinite sequence $\dots x(-2) x(-1) . x(0) x(1) x(2) \dots$. Observe that if $A=\{0,1\}$, then $A^\mathbb{Z}$ has a unique $1$-characteristic function on $0 \in \mathbb{Z}$:
\[ \chi_0^1 = \dots 0 0 . 1 0 0 \dots \]
However, if $A=\{ 0,1,2 \}$, then $A^\mathbb{Z}$ has infinitely many $1$-characteristic functions on $0 \in \mathbb{Z}$:
\[ \chi_0^1 = \dots a_{-2} a_{-1} . 1 a_{1} a_{2} \dots  \]
where $a_k \in \{0,2\}$, $k \in \mathbb{Z}$. 
\end{example}

\begin{lemma}\label{le-specialconfig}
Consider $\tau\in \CA(A^G)$ and suppose that $\chi_g^a \in \tau(A^{G})$, where $\chi_g^a$ is an $a$-characteristic function on $g \in G$. Then, for all $k \in G$, $\tau(A^{G})$ contains an $a$-characteristic function $\chi_{k}^a$ on $k$.
\end{lemma}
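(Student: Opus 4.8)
The plan is to leverage the single property that distinguishes classical $\id$-cellular automata in this context, namely $G$-equivariance (commutation with the shift action), together with the simple observation that translating an $a$-characteristic function yields another $a$-characteristic function centered at a shifted point. Since $\chi_g^a \in \tau(A^G)$ is given, the image already contains one such configuration; the goal is merely to slide it over to any desired position $k$, and equivariance guarantees that the slid configuration remains in the image.

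Concretely, first I would fix some $y \in A^G$ with $\tau(y) = \chi_g^a$, which exists by hypothesis. Given an arbitrary target $k \in G$, I would set $s := k g^{-1}$ and examine the translate $s \cdot \chi_g^a$. Using the shift action $(s \cdot x)(t) = x(s^{-1}t)$, one computes that $(s \cdot \chi_g^a)(t) = \chi_g^a(s^{-1}t)$ equals $a$ if and only if $s^{-1}t = g$, i.e., if and only if $t = sg = k$. Hence $s \cdot \chi_g^a$ is an $a$-characteristic function on $k$. Finally, because $\tau$ is $\id$-equivariant, we have
\[ s \cdot \chi_g^a = s \cdot \tau(y) = \tau(s \cdot y) \in \tau(A^G), \]
so $\tau(A^G)$ contains an $a$-characteristic function on $k$. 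As $k \in G$ was arbitrary, the claim follows.

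There is no serious obstacle here: the argument is a direct application of equivariance. The only point deserving a moment's care is the verification that $s \cdot \chi_g^a$ is again an $a$-characteristic function (rather than, say, acquiring extra or fewer positions where it takes the value $a$), but this is immediate from the fact that $t \mapsto s^{-1}t$ is a bijection of $G$, so the unique preimage condition $s^{-1}t = g$ is satisfied by exactly one $t$, namely $t = k$.
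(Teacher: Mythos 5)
Your proof is correct and follows essentially the same argument as the paper: both translate $\chi_g^a$ by $s = kg^{-1}$, verify via the computation $(s \cdot \chi_g^a)(t) = a \Leftrightarrow t = k$ that the translate is an $a$-characteristic function on $k$, and then invoke $G$-equivariance of $\tau$ to conclude it lies in $\tau(A^G)$. Your explicit mention of the witness $y$ with $\tau(y)=\chi_g^a$ makes the final step slightly more detailed than the paper's, but the argument is identical.
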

\begin{proof} 
For any $k \in G$, we claim that $\chi_k^a := kg^{-1} \cdot \chi_g^a$ is an $a$-characteristic function on $k$. Observe that for any $h \in G$,
\[ \left( kg^{-1} \cdot \chi_g^a \right)(h) = a \quad \Leftrightarrow \quad  \chi_g^a (gk^{-1} h) = a \quad \Leftrightarrow \quad gk^{-1} h = g \quad \Leftrightarrow \quad h=k. \]
This shows that $\chi_k^a(h)=a$ if and only if $h=k$, so $\chi_k^a$ is an $a$-characteristic function on $k$. Finally, it follows by the $G$-equivariance of $\tau$ that if $\chi_g^a \in \tau(A^{G})$ then $\chi_k^a = kg^{-1} \cdot \chi_g^a \in \tau(A^G)$.
\end{proof}
 
\begin{proposition}
Let $\mathcal{T} := \phi^* \circ \tau$ be with $\tau \in \CA(A^G)$, $\phi \in \Hom(H,G)$. If $\tau(A^{G})$ has an $a$-characteristic function on $g$, for some $g \in G$ and $a \in A$, then $\mathcal{T}$ has the UHP.
\end{proposition}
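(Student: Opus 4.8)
The plan is to translate the UHP requirement into a separation statement about the configurations in $\tau(A^G)$, and then use the characteristic functions to pin down the homomorphism pointwise. Concretely, to prove $\mathcal{T}$ has the UHP I would assume $\mathcal{T} = \phi^* \circ \tau = \psi^* \circ \tau$ for some $\psi \in \Hom(H,G)$ and aim to deduce $\psi = \phi$. The first step is to rewrite the hypothesis in evaluated form: since $(\phi^* \circ \tau)(x)(h) = \tau(x)(\phi(h))$ and likewise $(\psi^* \circ \tau)(x)(h) = \tau(x)(\psi(h))$, the equality $\phi^* \circ \tau = \psi^* \circ \tau$ is exactly the assertion that $y(\phi(h)) = y(\psi(h))$ for every $y \in \tau(A^G)$ and every $h \in H$.

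The second step exploits the hypothesis on characteristic functions. By assumption $\tau(A^G)$ contains an $a$-characteristic function on some $g$, so Lemma \ref{le-specialconfig} guarantees that $\tau(A^G)$ contains an $a$-characteristic function $\chi_k^a$ on \emph{every} $k \in G$. Fixing an arbitrary $h \in H$, I would specialize the evaluated identity to the single configuration $y = \chi_{\phi(h)}^a \in \tau(A^G)$, obtaining $\chi_{\phi(h)}^a(\phi(h)) = \chi_{\phi(h)}^a(\psi(h))$. The defining property of a characteristic function forces the left-hand side to equal $a$, so $\chi_{\phi(h)}^a(\psi(h)) = a$ as well; but by that same defining property this can hold only if $\psi(h) = \phi(h)$.

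Since $h \in H$ was arbitrary, this gives $\phi = \psi$, which is precisely the UHP. I do not expect a genuine obstacle here: the only subtle point is that the required characteristic function must be available at the specific element $\phi(h) \in G$, and this is exactly the content of Lemma \ref{le-specialconfig}, which promotes a single characteristic function in the image of $\tau$ to one at every point of $G$ via $G$-equivariance. The argument is a strict generalization of the surjective case noted just before the proposition: when $\tau$ is surjective all configurations, and in particular all characteristic functions, lie in $\tau(A^G)$, whereas here we only need the characteristic functions themselves to separate the two candidate homomorphisms.
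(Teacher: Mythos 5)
Your proof is correct and follows essentially the same route as the paper's: both invoke Lemma \ref{le-specialconfig} to obtain an $a$-characteristic function $\chi_{\phi(h)}^a \in \tau(A^G)$ for each $h \in H$, then evaluate the equality $\phi^* \circ \tau = \psi^* \circ \tau$ on (a preimage of) that configuration at $h$ to force $\chi_{\phi(h)}^a(\psi(h)) = a$ and hence $\psi(h) = \phi(h)$. Your preliminary reformulation of the hypothesis as $y(\phi(h)) = y(\psi(h))$ for all $y \in \tau(A^G)$ is just a cleaner packaging of the paper's step via $y_h$ with $\tau(y_h) = \chi_{\phi(h)}^a$, not a different argument.
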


\begin{proof}
Suppose that $\phi^* \circ \tau =\psi^* \circ \tau$ for some $\psi \in \Hom(H,G)$. Let $\chi_{g}^a \in \tau(A^{G})$ be an $a$-characteristic function on $g \in G$. By Lemma \ref{le-specialconfig}, there exists an $a$-characteristic function $\chi_{\phi(h)}^a \in \tau(A^G)$ for any $h \in H$. Let $y_h \in A^{G}$ be such that $\tau (y_h)= \chi_{\phi(h)}^a$. Since $\phi^* \circ \tau =\psi^* \circ \tau$, we have
\[   \phi^*(\chi_{\phi(h)}^a) = \mathcal{T}(y_h) = \psi^*(\chi_{\phi(h)}^a). \]
Evaluating at $h \in H$, we obtain
\[ a = \chi_{\phi(h)}^a (\phi(h))  = \phi^*(\chi_{\phi(h)}^a)(h) = \psi^*(\chi_{\phi(h)}^a)(h) = \chi_{\phi(h)}^a (\psi(h)).   \]
This implies that $\phi(h) = \psi(h)$. As $h \in H$ was arbitrary, we conclude that $\phi=\psi$.
\end{proof}



The main results of this section use the following elementary observation for infinite groups. 

\begin{lemma}\label{le-power}
Let $G$ be an infinite group and let $T$ be a finite subset of $G$. For any infinite subset $R \subseteq G$, there exists $r \in R$ such that $rT \cap T= \emptyset$.  
\end{lemma}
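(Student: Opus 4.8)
The plan is to identify exactly which elements $r \in G$ violate the conclusion, observe that there are only finitely many of them, and then exploit the fact that $R$ is infinite to pick an $r$ avoiding them all.

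First I would unpack the condition $rT \cap T \neq \emptyset$. This holds precisely when there exist $t_1, t_2 \in T$ with $r t_1 = t_2$, equivalently $r = t_2 t_1^{-1}$. Hence the set of ``bad'' elements — those $r$ for which $rT \cap T \neq \emptyset$ — is exactly
\[ D := \{ t_2 t_1^{-1} : t_1, t_2 \in T \} \subseteq G. \]
Since $T$ is finite, $D$ is finite as well, with $\vert D \vert \leq \vert T \vert^2$.

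Next, because $R$ is infinite while $D$ is finite, $R$ cannot be contained in $D$, so the set $R \setminus D$ is nonempty (indeed infinite). Choosing any $r \in R \setminus D$ yields an element of $R$ with $r \notin D$, which by the characterization above means $rT \cap T = \emptyset$, as required.

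There is essentially no obstacle here: the entire content is the cardinality comparison between the finite ``collision set'' $D$ and the infinite set $R$. The one point that needs care is getting the characterization of $D$ correct — in particular noting that $rt_1 = t_2$ rearranges to $r = t_2 t_1^{-1}$ rather than $r = t_1 t_2^{-1}$ — but this is purely bookkeeping. It is worth remarking that the hypothesis that $G$ itself is infinite is used only implicitly, via the assumed existence of an infinite subset $R$.
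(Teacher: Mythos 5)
Your proof is correct and is essentially the same argument as the paper's: your ``bad set'' $D = \{t_2 t_1^{-1} : t_1, t_2 \in T\}$ is exactly the set $TT^{-1}$ the paper uses, and the paper merely phrases the cardinality comparison as a proof by contradiction ($R \subseteq TT^{-1}$ would force $R$ finite) where you argue directly. Your closing remark that the infiniteness of $G$ is used only through the existence of the infinite subset $R$ is also accurate.
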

\begin{proof}
Assume that $r T\cap T\neq \emptyset$ for all $r \in R$. Hence, for all $r \in R$ there exist $t_1, t_2 \in T$ such that $r t_1 = t_2$. This implies that $r = t_2 (t_1)^{-1} \in T T^{-1}$, so $R \subseteq T T^{-1}$, where $T^{-1} := \{ t^{-1} : t \in T\}$. As $T$ is finite, then $T T^{-1}$ is finite, so $R \subseteq T T^{-1}$ contradicts the hypothesis that $R$ is infinite. 
\end{proof}

As a first application of the previous result, we have a characterization of constant GCA. 

\begin{lemma}
Suppose that $\Hom(H,G)$ contains a homomorphism with infinite image. Then, $\mathcal{T} \in \GCA(A^G,A^H)$ is constant if and only if $\mathcal{T} =\psi^* \circ \tau$, for all $\psi \in \Hom(H,G)$.
\end{lemma}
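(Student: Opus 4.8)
The plan is to prove the two implications separately, the forward one being essentially a bookkeeping consequence of earlier results and the backward one carrying all the content. Throughout, let $\tau \in \CA(A^G)$ be the canonical cellular automaton attached to $\mathcal{T}$ by Lemma \ref{lemma:GCA-tau_phi}, let $T$ be its minimal memory set (which equals that of $\mathcal{T}$), and let $\mu : A^T \to A$ be the associated local function.

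For the direction $(\Rightarrow)$, suppose $\mathcal{T}$ is constant. By Lemma \ref{le-constant}, $\mathcal{T}$ is a $\psi$-cellular automaton for every $\psi \in \Hom(H,G)$. Applying Lemma \ref{lemma:GCA-tau_phi} to each such representation and recalling (as already observed before the definition of the UHP) that the underlying $\id$-cellular automaton is determined solely by the common local function of $\mathcal{T}$, the attached automaton is the same $\tau$ in every case. Hence $\mathcal{T} = \psi^* \circ \tau$ for all $\psi$, and this direction does not use the infinite-image hypothesis.

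For the direction $(\Leftarrow)$, I would argue that $\mu$ is constant, which forces both $\tau$ and $\mathcal{T} = \psi^* \circ \tau$ to be constant. The idea is to exploit two particular homomorphisms. First, applying the hypothesis $\mathcal{T} = \psi^* \circ \tau$ to the trivial homomorphism $\theta : H \to G$ (sending everything to $e$) gives $\mathcal{T}(x)(h) = \tau(x)(\theta(h)) = \tau(x)(e) = \mu(x\vert_T)$ for all $x \in A^G$ and $h \in H$. Second, applying it to a homomorphism $\phi_0 \in \Hom(H,G)$ with infinite image gives $\mathcal{T}(x)(h) = \tau(x)(\phi_0(h)) = \mu((\phi_0(h)^{-1}\cdot x)\vert_T)$. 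Equating the two expressions and letting $r = \phi_0(h)$ range over $\phi_0(H)$ yields the key relation $\mu(x\vert_T) = \mu((r^{-1}\cdot x)\vert_T)$ for all $x \in A^G$ and all $r \in \phi_0(H)$.

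The crucial step is then to upgrade this invariance to full constancy of $\mu$, and this is where I expect the main difficulty to lie and where the infinite-image hypothesis is essential. Given arbitrary patterns $z, z' \in A^T$, I would note that $G$ is infinite (it contains the infinite set $\phi_0(H)$) and $T$ is finite, so Lemma \ref{le-power} applied to the infinite subset $R = \phi_0(H)$ produces some $r \in \phi_0(H)$ with $rT \cap T = \emptyset$. Using this disjointness, I can define a single configuration $x \in A^G$ by setting $x\vert_T = z'$ and $x(rt) = z(t)$ for all $t \in T$ (consistent precisely because $T$ and $rT$ are disjoint), extending arbitrarily elsewhere. Since the shift action satisfies $(r^{-1}\cdot x)(t) = x(rt)$, this forces $(r^{-1}\cdot x)\vert_T = z$, and the key relation then reads $\mu(z) = \mu(z')$. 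As $z, z'$ were arbitrary, $\mu$ is constant, completing the proof. The whole weight of the argument rests on producing a separating shift $r$ with $rT \cap T = \emptyset$, which is exactly what an infinite image guarantees via Lemma \ref{le-power}; without it, the local patterns on $T$ and $rT$ could always overlap, and the two arbitrary patterns $z,z'$ could not be realized simultaneously in one configuration.
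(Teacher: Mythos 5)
Your proof is correct and follows essentially the same route as the paper: the forward direction via Lemma \ref{le-constant} (with the uniqueness of $\tau$ from Lemma \ref{lemma:GCA-tau_phi}), and the converse by comparing the trivial homomorphism with one of infinite image to get shift-invariance of $\mu$ along $\phi_0(H)$, then using Lemma \ref{le-power} to place two arbitrary patterns on disjoint translates $T$ and $rT$ of a single configuration. The only cosmetic difference is that the paper phrases the last step as a contradiction (assuming $\mu(z_1)\neq\mu(z_2)$) while you prove $\mu(z)=\mu(z')$ directly, which is the same argument.
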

\begin{proof}
The direct implication follows from Lemma \ref{le-constant}, so assume that $\mathcal{T} =\psi^* \circ \tau$, for all $\psi \in \Hom(H,G)$. Let $T$ be a memory set and let $\mu : A^T \to A$ be the local function of $\mathcal{T}$. Let $\psi_0 : H \to G$ be the trivial homomorphism (i.e., $\psi_0(h)=e$, for all $h \in H$), and let $\psi_1 : H \to G$ be a homomorphism such that $\psi_1(H)$ is infinite. Then, for all $x \in A^G$, $h \in H$, we have
\begin{equation}\label{property}
 \mu( (\psi_1(h^{-1}) \cdot x)\vert_T ) = \psi_1^* \circ \tau(x)(h) = \psi_0^* \circ \tau(x)(h) = \mu( x \vert_T). 
\end{equation}
Suppose that $\mu$ is not constant, so there exist $z_1, z_2 \in A^T$ such that $\mu(z_1) \neq \mu(z_2)$. By Lemma \ref{le-power}, there exists $k \in \psi_1(H)$ such that $k T \cap T = \emptyset$. Define $x \in A^G$ as follows 
\[  x(g) := \begin{cases} 
z_1(g) & \text{ if } g \in T, \\
z_2(k^{-1}g) & \text{ if } g \in k T, \\ 
0 & \text{ otherwise.} 
\end{cases} \]
Note that $z_1 = x \vert_T$ and $z_2 = ( k^{-1} \cdot x) \vert_T$. By (\ref{property}), we obtain that $\mu(z_1) = \mu(z_2)$, which is a contradiction. Therefore, $\mu$ is constant, so $\mathcal{T}$ is constant.  
\end{proof}

For $\phi, \psi \in \Hom(H,G)$, define the \emph{difference set between $\phi$ and $\psi$} by
\[ \Delta(\phi, \psi) := \{ \psi(h)^{-1} \phi(h) : h \in H \} \subseteq G. \]
Note that $\Delta(\phi,\psi) = \{ e \}$ if and only if $\phi = \psi$. If $G$ is abelian, then $\Delta(\phi, \psi)$ is a subgroup of $G$. 

\begin{theorem}\label{th-main}
Let $\mathcal{T} := \phi^* \circ \tau$ be non-constant, with $\tau \in \CA(A^G)$, $\phi \in \Hom(H,G)$. If $\Delta(\phi, \psi)$ is infinite for $\psi \in \Hom(H,G)$, then $\mathcal{T} \neq \psi^* \circ \tau$. 
\end{theorem}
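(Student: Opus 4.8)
The plan is to argue by contradiction: assume $\mathcal{T} = \phi^* \circ \tau = \psi^* \circ \tau$ and show this forces the local function of $\tau$ to be constant, contradicting the hypothesis that $\mathcal{T}$ is non-constant. The first step is to recast the equality $\phi^*\circ\tau = \psi^*\circ\tau$ at the level of the local rule. Writing $T$ for the minimal memory set shared by $\mathcal{T}$ and $\tau$ (Lemma \ref{lemma:GCA-tau_phi}) and $\mu : A^T \to A$ for the local function, one has $\tau(x)(g) = \mu((g^{-1}\cdot x)\vert_T)$, hence $(\phi^*\circ\tau)(x)(h) = \mu((\phi(h)^{-1}\cdot x)\vert_T)$ and similarly for $\psi$. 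Thus the assumed equality is equivalent to
\[ \mu\big((\phi(h)^{-1}\cdot x)\vert_T\big) = \mu\big((\psi(h)^{-1}\cdot x)\vert_T\big), \qquad \forall x \in A^G,\ h \in H. \]
Since $\mathcal{T}(x)(e) = \mu(x\vert_T)$, a constant $\mu$ would make $\mathcal{T}$ constant; so the non-constancy of $\mathcal{T}$ supplies $z_1, z_2 \in A^T$ with $\mu(z_1) \neq \mu(z_2)$, which will be the source of the contradiction.

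The heart of the argument is to produce a single configuration $x$ on which the two sides of the displayed identity read off as $z_1$ and $z_2$ respectively. For a fixed $h$, reading $\mu$ at $\phi(h)$ samples $x$ on the window $\phi(h)T$, while reading at $\psi(h)$ samples $x$ on $\psi(h)T$; I can prescribe $x$ freely and independently on these two windows precisely when they are disjoint. Now $\phi(h)T \cap \psi(h)T = \emptyset$ is equivalent to $\delta T \cap T = \emptyset$ with $\delta := \psi(h)^{-1}\phi(h) \in \Delta(\phi,\psi)$, and this is where the hypothesis enters: because $\Delta(\phi,\psi)$ is infinite (in particular $G$ is infinite) and $T$ is finite, Lemma \ref{le-power} applied to $R = \Delta(\phi,\psi)$ yields some $h \in H$ for which $\delta = \psi(h)^{-1}\phi(h)$ satisfies $\delta T \cap T = \emptyset$, hence $\phi(h)T \cap \psi(h)T = \emptyset$.

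With such an $h$ fixed, I define $x \in A^G$ by $x(\phi(h)t) := z_1(t)$ and $x(\psi(h)t) := z_2(t)$ for $t \in T$ (well-defined by disjointness of the two windows) and $x := 0$ elsewhere. Then $(\phi(h)^{-1}\cdot x)\vert_T = z_1$ and $(\psi(h)^{-1}\cdot x)\vert_T = z_2$, so the displayed identity forces $\mu(z_1) = \mu(z_2)$, contradicting the choice of $z_1, z_2$. Therefore $\mathcal{T} \neq \psi^*\circ\tau$, as claimed.

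I expect the crux that makes the whole thing work to be the translation of ``the two sampling windows can be decoupled'' into the disjointness condition $\delta T \cap T = \emptyset$ on a \emph{single} element of the difference set, together with the observation that the infinitude of $\Delta(\phi,\psi)$ is exactly the input Lemma \ref{le-power} requires. The remaining verifications — the memory-set identification via Lemma \ref{lemma:GCA-tau_phi}, the equivalence $\phi(h)T\cap\psi(h)T=\emptyset \Leftrightarrow \delta T \cap T = \emptyset$ (obtained by left-multiplying by $\psi(h)^{-1}$), and the well-definedness of $x$ — are routine bookkeeping with the shift action.
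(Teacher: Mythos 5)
Your proof is correct and follows essentially the same route as the paper's: both use Lemma \ref{le-power} on $\Delta(\phi,\psi)$ to find $h$ with $\phi(h)T \cap \psi(h)T = \emptyset$, then build a configuration reading $z_1$ and $z_2$ on the two disjoint windows to separate the two compositions. Your contradiction framing versus the paper's direct exhibition of a distinguishing pair $(x,h)$ is only a cosmetic difference.
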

\begin{proof} 
Let $T \subseteq G$ be a memory set and $\mu: A^{T}\to A$ be a local function of $\mathcal{T}$. Since $\mathcal{T}$ is not constant, then $\mu$ is not constant, so there exist $z_{1},z_{2}\in A^{T}$ such that $\mu(z_{1})\neq \mu(z_{2})$. 

By Lemma \ref{le-power} applied to $\Delta(\phi, \psi)$, there exists $h \in H$ such that
\[  \psi(h)^{-1} \phi(h) T \cap T = \emptyset \quad \Rightarrow \quad   \phi(h) T \cap  \psi(h) T  = \emptyset.  \] 
Take $x \in A^G$ such that $x \vert_{\phi(h)T}=z_{1}$ and $x \vert_{\psi(h)T}=z_{2}$. This implies that $(\phi(h^{-1}) \cdot x)|_{T} = z_1$ and $(\psi(h^{-1}) \cdot x)|_{T} = z_2$. Therefore, 
\[ \phi^* \circ \tau (x)(h)=\mu ((\phi(h^{-1}) \cdot x)|_{T}) = \mu (z_1) \neq \mu(z_2) = \mu ((\psi(h^{-1}) \cdot x)|_{T})=\psi^* \circ \tau (x)(h). \]
Therefore, $\mathcal{T} =  \phi^* \circ \tau  \neq \psi^* \circ \tau$. 
\end{proof}

\begin{corollary}\label{cor-UHP}
Let $G$ be a torsion-free abelian group (for example, $G \cong \mathbb{Z}^d$, with $d \in \mathbb{N}$ or $G \cong \mathbb{Q}$). Then, every non-constant $\mathcal{T} \in \GCA(A^G,A^H)$ has the UHP. 
\end{corollary}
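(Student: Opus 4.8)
The plan is to derive the corollary directly from Theorem \ref{th-main}. By Lemma \ref{lemma:GCA-tau_phi}, any non-constant $\mathcal{T} \in \GCA(A^G,A^H)$ can be written uniquely as $\mathcal{T} = \phi^* \circ \tau$ with $\tau \in \CA(A^G)$ and $\phi \in \Hom(H,G)$. To establish the UHP, I must show that if $\mathcal{T} = \phi^* \circ \tau = \psi^* \circ \tau$ for some $\psi \in \Hom(H,G)$, then $\psi = \phi$. The contrapositive of Theorem \ref{th-main} says that the equality $\mathcal{T} = \psi^* \circ \tau$ forces $\Delta(\phi,\psi)$ to be \emph{finite}; hence it suffices to prove that, for $G$ torsion-free abelian, a finite difference set must be trivial.

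The first step is to recall the observation recorded just before Theorem \ref{th-main}: since $G$ is abelian, $\Delta(\phi,\psi)$ is a subgroup of $G$. Concretely, in the abelian case the map $h \mapsto \psi(h)^{-1}\phi(h)$ is a group homomorphism $H \to G$, and $\Delta(\phi,\psi)$ is precisely its image. The second step uses torsion-freeness: any nontrivial subgroup of a torsion-free group is infinite, because a nonidentity element $g$ has infinite order and therefore generates an infinite cyclic subgroup $\langle g \rangle$. Consequently, every finite subgroup of $G$ is trivial. Applying this to $\Delta(\phi,\psi)$, the finiteness forced by $\mathcal{T} = \psi^* \circ \tau$ yields $\Delta(\phi,\psi) = \{e\}$, which is equivalent to $\phi = \psi$. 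This shows that $\mathcal{T}$ has the UHP.

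I do not expect any substantial obstacle: the corollary is a clean specialization of Theorem \ref{th-main}, and the only points requiring care are the two structural facts invoked above — that the difference set is a subgroup in the abelian setting, and that a torsion-free group has no nontrivial finite subgroup — both of which are elementary. The listed examples $G \cong \mathbb{Z}^d$ and $G \cong \mathbb{Q}$ are covered since both are torsion-free abelian.
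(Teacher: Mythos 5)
Your proposal is correct and takes essentially the same route as the paper: both reduce the corollary to Theorem \ref{th-main} via the two facts that $\Delta(\phi,\psi)$ is a subgroup of $G$ when $G$ is abelian (the image of the homomorphism $h \mapsto \psi(h)^{-1}\phi(h)$) and that a torsion-free group has no nontrivial finite subgroup. The only difference is presentational --- you argue by the contrapositive of Theorem \ref{th-main} (the equality $\phi^* \circ \tau = \psi^* \circ \tau$ forces $\Delta(\phi,\psi)$ finite, hence trivial), while the paper argues directly that $\phi \neq \psi$ makes $\Delta(\phi,\psi)$ nontrivial and thus infinite.
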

\begin{proof}
Let $\phi, \psi \in \Hom(H,G)$ be such that $\phi \neq \psi$. As $G$ is abelian, then $\Delta(\phi, \psi)$ is a subgroup of $G$. Since $G$ is torsion-free and $\Delta(\phi, \psi)$ is nontrivial, then $\Delta(\phi, \psi)$ must be infinite. The result follows by Theorem \ref{th-main}. 
\end{proof}



We are interested now in the converse of Theorem \ref{th-main}. Recall that a function $\mu : A^T \to A$ is \emph{symmetric} if for any $f \in \Sym(T)$, we have $\mu(z) = \mu (z \circ f)$ for all $z \in A^T$.

\begin{proposition}\label{prop}
Let $\mathcal{T} : A^G \to A^H$ be a $\phi$-cellular automaton with memory set $T \subseteq G$ and local function $\mu : A^T \to A$. Suppose that there exists $\psi \in \Hom(H,G)$ such that 
\[\psi(h)^{-1}\phi(h)T = T, \quad \forall h \in H. \]
If $\mu$ is symmetric, then $\mathcal{T}$ is also a $\psi$-cellular automaton.  
\end{proposition}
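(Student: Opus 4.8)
The plan is to show that the \emph{same} memory set $T$ and local function $\mu$ that present $\mathcal{T}$ as a $\phi$-cellular automaton also present it as a $\psi$-cellular automaton. By the defining formula (\ref{defGCA}), the value of the $\psi$-cellular automaton built from $(T,\mu)$ at $(x,h)$ would be $\mu\bigl((\psi(h^{-1}) \cdot x)\vert_T\bigr)$, while $\mathcal{T}(x)(h) = \mu\bigl((\phi(h^{-1}) \cdot x)\vert_T\bigr)$. Hence the whole proposition reduces to the pointwise identity
\[ \mu\bigl( (\phi(h^{-1}) \cdot x)\vert_T \bigr) = \mu\bigl( (\psi(h^{-1}) \cdot x)\vert_T \bigr), \quad \forall x \in A^G,\ h \in H. \]

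First I would unwind the shift action on each of the two arguments of $\mu$. Using $(g \cdot x)(k) = x(g^{-1}k)$ together with $\phi(h^{-1}) = \phi(h)^{-1}$, for every $t \in T$ one computes $(\phi(h^{-1}) \cdot x)(t) = x(\phi(h)\, t)$ and, likewise, $(\psi(h^{-1}) \cdot x)(t) = x(\psi(h)\, t)$. Thus the two restrictions fed into $\mu$ are the functions $t \mapsto x(\phi(h)t)$ and $t \mapsto x(\psi(h)t)$ on $T$, and the goal is to see that one is a rearrangement of the other.

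Next I would manufacture, for each fixed $h \in H$, an element of $\Sym(T)$ out of the hypothesis. Set $c_h := \psi(h)^{-1}\phi(h) \in \Delta(\phi,\psi)$; the assumption $c_h T = T$ says precisely that left multiplication by $c_h$ carries $T$ onto itself, and since this map is injective and $T$ is finite, $\sigma_h : T \to T$ defined by $\sigma_h(t) := c_h t$ is a bijection, i.e. $\sigma_h \in \Sym(T)$. A direct substitution then gives, for all $t \in T$,
\[ (\psi(h^{-1}) \cdot x)(\sigma_h(t)) = x\bigl(\psi(h)\, c_h\, t\bigr) = x\bigl(\phi(h)\, t\bigr) = (\phi(h^{-1}) \cdot x)(t), \]
which says exactly that $(\phi(h^{-1}) \cdot x)\vert_T = \bigl((\psi(h^{-1}) \cdot x)\vert_T\bigr) \circ \sigma_h$.

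Finally, since $\mu$ is symmetric and $\sigma_h \in \Sym(T)$, applying $\mu$ to both sides of the last equality yields $\mu\bigl((\phi(h^{-1})\cdot x)\vert_T\bigr) = \mu\bigl((\psi(h^{-1})\cdot x)\vert_T\bigr)$, which is the required identity; thus $\mathcal{T}$ is a $\psi$-cellular automaton with memory set $T$ and local function $\mu$. The only genuinely delicate point is the third step: one must use the hypothesis $c_h T = T$ to produce an honest \emph{permutation} of $T$ (a mere set equality is what is given, and finiteness of $T$ is what upgrades injectivity to bijectivity), and one must track the composition so that $\sigma_h$ appears on the side where symmetry of $\mu$ can be invoked. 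Everything else is a routine unwinding of the shift-action and composition conventions, with no appeal to infinitude or to the prodiscrete topology.
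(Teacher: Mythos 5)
Your proposal is correct and follows essentially the same route as the paper: the paper likewise fixes $h$, defines the permutation $f_h(t) := \psi(h)^{-1}\phi(h)t$ of $T$ (your $\sigma_h$), verifies $(\psi(h^{-1})\cdot x)\vert_T \circ f_h = (\phi(h^{-1})\cdot x)\vert_T$ by the same substitution, and invokes symmetry of $\mu$. Your only addition is the explicit check that $\sigma_h$ is a bijection of $T$ (which the paper asserts implicitly, and which in fact follows from $c_hT = T$ together with cancellation in $G$, without needing finiteness); this is a harmless extra verification, not a different argument.
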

\begin{proof}
Fix $h \in H$. Define $f_h \in \Sym(T)$ by $f_h(t) := \psi(h)^{-1}\phi(h)t$, for all $t \in T$. Observe that 
 \[ ( \psi(h^{-1}) \cdot x) \vert_T  \circ f_h(t) = x( \psi(h) f_h(t)) = x( \phi(h)t) = ( \phi(h^{-1}) \cdot x) \vert_T (t) . \]
As $\mu$ is symmetric,
\[ \mu( \psi(h^{-1}) \cdot x) \vert_T ) = \mu( ( \psi(h^{-1}) \cdot x) \vert_T  \circ f_h) = \mu( \phi(h^{-1}) \cdot x) \vert_T) = \mathcal{T}(x)(h) . \]
The result follows.
\end{proof}

\begin{remark}
In Proposition \ref{prop}, the hypothesis $\psi(h)^{-1}\phi(h)T = T$ for all $h \in H$ implies that $\Delta(\phi, \psi)$ is finite (c.f. Lemma \ref{le-power}). 
\end{remark}

\begin{remark}
If $T$ is a finite subset of $G$ that is not a subgroup, we cannot conclude that $aT = T$ for all $a \in T$. Therefore, the hypothesis that $\psi(h)^{-1}\phi(h)T = T$, for all $h \in H$, in Proposition \ref{prop} may not be replaced by the assumption that $\Delta(\phi, \psi) \subseteq T$. 
\end{remark}

The following is a converse of Theorem \ref{th-main} when $G$ is abelian or locally finite (i.e., every finitely generated subgroup of $G$ is finite).

\begin{proposition}\label{prop-last}
Let $G$ be an abelian or locally finite group, and let $\phi, \psi \in \Hom(H,G)$. Then, $\phi^* \circ \tau \neq \psi^* \circ \tau$ for all non-constant $\tau \in \CA(A^G)$ if and only if $\Delta(\phi, \psi)$ is infinite. 
\end{proposition}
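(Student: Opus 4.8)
The plan is to prove the two implications separately, after a short preliminary observation that makes the forward implication essentially free. Let $\mu : A^T \to A$ be the local function of $\tau$ on a memory set $T$. From the computation in Lemma \ref{lemma:GCA-tau_phi} we have $\phi^* \circ \tau(x)(h) = \mu((\phi(h^{-1})\cdot x)\vert_T)$; evaluating at $h=e$ gives $\phi^*\circ\tau(x)(e) = \mu(x\vert_T)$, and as $x$ ranges over $A^G$ the restriction $x\vert_T$ ranges over all of $A^T$. Hence $\phi^*\circ\tau$ is constant if and only if $\mu$ is constant, which is in turn equivalent to $\tau$ being constant. Thus ``$\tau$ non-constant'' and ``$\phi^*\circ\tau$ non-constant'' are equivalent, and the implication ``$\Delta(\phi,\psi)$ infinite $\Rightarrow$ $\phi^*\circ\tau \neq \psi^*\circ\tau$ for every non-constant $\tau$'' is exactly the content of Theorem \ref{th-main}.

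It then remains to prove the contrapositive of the converse: assuming $\Delta(\phi,\psi)$ is finite, I will exhibit a single non-constant $\tau \in \CA(A^G)$ with $\phi^*\circ\tau = \psi^*\circ\tau$. The strategy is to produce a finite subset $T \subseteq G$ that is invariant under left multiplication by every element of $\Delta(\phi,\psi)$ and then invoke Proposition \ref{prop} with a symmetric local function on $T$. The natural candidate is the subgroup $T := \langle \Delta(\phi,\psi)\rangle$ generated by the difference set: being a subgroup containing $\Delta(\phi,\psi)$, it automatically satisfies $\delta T = T$ for every $\delta \in \Delta(\phi,\psi)$, so in particular $\psi(h)^{-1}\phi(h)\,T = T$ for all $h \in H$, which is precisely the hypothesis of Proposition \ref{prop}.

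The key point, and the only place the hypotheses on $G$ enter, is the finiteness of $T$. If $G$ is abelian, then $\Delta(\phi,\psi)$ is already a subgroup (as observed before Theorem \ref{th-main}), so $T = \Delta(\phi,\psi)$ is finite by assumption. If $G$ is locally finite, then $T$ is a finitely generated subgroup, generated by the finite set $\Delta(\phi,\psi)$, hence finite by definition of local finiteness. Once $T$ is known to be finite and nonempty (it contains $e$), I choose any non-constant symmetric $\mu : A^T \to A$, for instance $\mu(z) = 1$ if $z$ is the all-ones configuration and $\mu(z)=0$ otherwise, which is symmetric and non-constant since $\{0,1\}\subseteq A$. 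Letting $\tau \in \CA(A^G)$ be the cellular automaton with memory set $T$ and local function $\mu$, Proposition \ref{prop} applies and shows that $\phi^*\circ\tau$ is also $\psi$-equivariant; tracing through its proof (where the displayed identity reads $\mu((\psi(h^{-1})\cdot x)\vert_T) = \phi^*\circ\tau(x)(h)$, and the left side equals $\psi^*\circ\tau(x)(h)$) yields precisely $\phi^*\circ\tau = \psi^*\circ\tau$. Since $\mu$ is non-constant, so is $\tau$, which completes the contrapositive.

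I expect the main, and really the only, obstacle to be the finiteness of $\langle \Delta(\phi,\psi)\rangle$. A finite generating set need not generate a finite subgroup in an arbitrary group, so some hypothesis beyond ``$\Delta(\phi,\psi)$ finite'' is genuinely needed, and the abelian and locally finite assumptions are exactly what guarantee it; indeed this is why the statement restricts to those two classes rather than claiming a converse of Theorem \ref{th-main} in full generality. Everything else, namely the non-constancy equivalence, the construction of a symmetric $\mu$, and the bookkeeping in the application of Proposition \ref{prop}, is routine.
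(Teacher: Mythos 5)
Your proof is correct and takes essentially the same route as the paper's: the forward direction via Theorem \ref{th-main}, and for the converse the subgroup $T = \langle \Delta(\phi,\psi) \rangle$ (finite in the abelian and locally finite cases for exactly the reasons you give) combined with a symmetric non-constant local function fed into Proposition \ref{prop}, the only cosmetic difference being that the paper uses $\mu(y) = \sum_{t \in T} y(t)$ with addition modulo $\vert A \vert$ where you use the all-ones indicator. Your explicit preliminary check that non-constancy of $\tau$ and of $\phi^* \circ \tau$ coincide is a small point the paper leaves implicit, and it is a welcome addition since Theorem \ref{th-main} is stated for non-constant $\phi^* \circ \tau$ while the proposition quantifies over non-constant $\tau$.
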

\begin{proof}
The converse is given by Theorem \ref{th-main}. Assume that $\Delta(\phi, \psi)$ is finite and let $T := \langle \Delta(\phi, \psi) \rangle$ (i.e., $T$ is the subgroup of $G$ generated by $\Delta(\phi, \psi)$). If $G$ is locally finite, then $T$ is a finite subgroup of $G$ by definition. If $G$ is abelian, then $\Delta(\phi, \psi)$ is itself a finite subgroup of $G$, and $T = \Delta(\phi, \psi)$.  

If $\vert A \vert =q$, equip the set $A$ with addition modulo $q$. Consider the cellular automaton $\tau : A^G \to A^G$ with memory set $T$ and local function $\mu : A^T \to A$ defined by
\[ \mu(y) := \sum_{t \in T} y(t), \quad \forall y \in A^T. \]
As $T$ is a subgroup of $G$ that contains $\Delta(\phi, \psi)$, then $\psi(h)^{-1} \phi(h)T = T$ for all $h \in H$, by closure. Furthermore, the local function $\mu$ is symmetric, so we conclude that $\phi^* \circ \tau = \psi^* \circ \tau$ by Proposition \ref{prop}. 
\end{proof}



\section{GCA over quotient groups}

In this section we generalize several of the results included in \cite[Ch. 1]{CSC10}. For a subgroup $K \leq G$, define the set of $K$-periodic configurations in $A^G$ by
\[ \Fix(K) := \{ x \in A^G : k \cdot x = x, \ \forall k \in K \}. \]

 \begin{lemma}\label{le-fix}
 Let $\phi \in \End(G)$ and let $K \leq G$ be a $\phi$-invariant subgroup. Then, $\Fix(K)$ is $\mathcal{T}$-invariant, for every $\mathcal{T} \in \GCA(A^G)$. 
 \end{lemma}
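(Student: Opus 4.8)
The plan is to verify directly that $\mathcal{T}$ carries $\Fix(K)$ into itself, using nothing more than the $\phi$-equivariance of $\mathcal{T}$ together with the $\phi$-invariance of $K$. I read the statement in the natural way: $\mathcal{T}$ is a $\phi$-cellular automaton for the same $\phi$ fixed in the hypothesis, so by Theorem~\ref{th-old}~(1.) it is $\phi$-equivariant, i.e. $k \cdot \mathcal{T}(x) = \mathcal{T}(\phi(k) \cdot x)$ for all $x \in A^G$ and all $k \in G$. (This matching of homomorphisms is necessary: $\phi$-invariance of $K$ is the exact ingredient that will make the computation close, whereas an arbitrary $\psi$-cellular automaton would require $K$ to be $\psi$-invariant.)

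First I would fix an arbitrary $x \in \Fix(K)$ and an arbitrary $k \in K$, and aim to show $k \cdot \mathcal{T}(x) = \mathcal{T}(x)$; proving this for every $k \in K$ is precisely the assertion $\mathcal{T}(x) \in \Fix(K)$. Applying $\phi$-equivariance at $h = k$ gives $k \cdot \mathcal{T}(x) = \mathcal{T}(\phi(k) \cdot x)$. The two hypotheses now combine: since $K$ is $\phi$-invariant we have $\phi(k) \in K$, and since $x \in \Fix(K)$ this yields $\phi(k) \cdot x = x$. Substituting, $k \cdot \mathcal{T}(x) = \mathcal{T}(\phi(k) \cdot x) = \mathcal{T}(x)$, as wanted. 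As $k \in K$ was arbitrary, $\mathcal{T}(x) \in \Fix(K)$, and as $x \in \Fix(K)$ was arbitrary, $\mathcal{T}(\Fix(K)) \subseteq \Fix(K)$, which is the claimed invariance.

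There is no genuine obstacle here: the argument is essentially a one-line computation once the definitions are unwound. The only points requiring care are applying equivariance in the correct direction—note that the homomorphism $\phi$ lands on the configuration side, so $k \cdot \mathcal{T}(x)$ becomes $\mathcal{T}(\phi(k) \cdot x)$ rather than $\mathcal{T}(k \cdot x)$—and recognizing that $\phi$-invariance of $K$ is exactly what keeps $\phi(k)$ inside $K$, so that the $K$-periodicity of $x$ can be invoked. Without $\phi$-invariance, $\phi(k)$ could leave $K$ and the periodicity hypothesis on $x$ would no longer apply, which is why this assumption is indispensable.
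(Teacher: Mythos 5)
Your proof is correct and coincides with the paper's own argument: both fix $x \in \Fix(K)$ and $k \in K$, apply $\phi$-equivariance to get $k \cdot \mathcal{T}(x) = \mathcal{T}(\phi(k) \cdot x)$, and use $\phi$-invariance of $K$ together with $K$-periodicity of $x$ to conclude $\mathcal{T}(x) \in \Fix(K)$. Your explicit remark that $\mathcal{T}$ must be read as a $\phi$-cellular automaton for the \emph{same} $\phi$ fixed in the hypothesis matches the paper's implicit reading and is a worthwhile clarification, but the argument itself is the same.
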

\begin{proof}
Let $x \in \Fix(K)$ and let $\mathcal{T} : A^G \to A^G$ be a $\phi$-cellular automaton. It follows by $\phi$-equivariance and the $\phi$-invariance of $K$ that 
\[ k \cdot \mathcal{T}(x) = \mathcal{T}(\phi(k) \cdot x) = \mathcal{T}(x),  \]
for all $k \in K$. This shows that $\mathcal{T}(x) \in \Fix(K)$. 
\end{proof}

For a normal subgroup $N\trianglelefteq G$, let $\rho :  G \to G/N$ be the canonical projection $\rho(g):=gN$, for all $g \in G$. The following result is well-known. 

\begin{lemma}\label{le-endGN}
Let $\phi \in \End(G)$ and let $N \trianglelefteq G$ be a normal $\phi$-invariant subgroup. Then, there exists a unique endomorphism $\hat{\phi} : G/N \to G/N$ such that the following diagram commutes
\[ \begin{tikzcd}
G \arrow{r}{\phi} \arrow[swap]{d}{\rho} & G \arrow{d}{\rho} \\%
G/N \arrow{r}{\hat{\phi}}& G/N
\end{tikzcd}\]
\end{lemma}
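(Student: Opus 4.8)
The plan is to define $\hat{\phi}$ by the only formula compatible with commutativity, namely $\hat{\phi}(gN) := \phi(g)N$ for all $g \in G$, and then verify in turn that this assignment is well-defined, that it is a group endomorphism, that the diagram commutes, and that it is the unique such map. Since $\rho$ is surjective, uniqueness is essentially forced: any $\hat{\phi}$ satisfying $\hat{\phi} \circ \rho = \rho \circ \phi$ must send $gN = \rho(g)$ to $\rho(\phi(g)) = \phi(g)N$, so there can be at most one candidate. The content of the lemma is therefore that this candidate is genuinely well-defined, and this is the one step where the hypothesis that $N$ is $\phi$-invariant is used.

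First I would check well-definedness. Suppose $gN = g'N$, so that $g^{-1}g' \in N$. Applying $\phi$ and using that $N$ is $\phi$-invariant gives
\[ \phi(g)^{-1}\phi(g') = \phi(g^{-1}g') \in \phi(N) \subseteq N, \]
whence $\phi(g)N = \phi(g')N$. Thus the value $\hat{\phi}(gN) = \phi(g)N$ does not depend on the chosen representative $g$, and $\hat{\phi} : G/N \to G/N$ is a well-defined function. Note that normality of $N$ is what makes $G/N$ a group in the first place, while $\phi$-invariance is precisely what makes $\phi$ descend to the quotient.

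Next I would verify that $\hat{\phi}$ is an endomorphism and that the diagram commutes; both are immediate from $\rho$ and $\phi$ being homomorphisms. For any $g, g' \in G$ one computes
\[ \hat{\phi}(gN \cdot g'N) = \hat{\phi}(gg'N) = \phi(gg')N = (\phi(g)N)(\phi(g')N) = \hat{\phi}(gN)\,\hat{\phi}(g'N), \]
so $\hat{\phi} \in \End(G/N)$, and commutativity $\rho \circ \phi = \hat{\phi} \circ \rho$ holds by construction since both composites send $g$ to $\phi(g)N$. Uniqueness then follows from the surjectivity of $\rho$ as explained above. I do not expect any real obstacle here: this is the standard universal property of the quotient, and the only substantive point is the well-definedness computation, which the $\phi$-invariance of $N$ supplies directly.
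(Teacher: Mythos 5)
Your proof is correct and complete: the paper states this lemma without proof (it is cited as ``well-known''), and your argument is precisely the standard one it implicitly relies on --- define $\hat{\phi}(gN) := \phi(g)N$, use the $\phi$-invariance $\phi(N) \subseteq N$ for well-definedness, and derive uniqueness from the surjectivity of $\rho$. You also correctly isolate the roles of the two hypotheses (normality makes $G/N$ a group; $\phi$-invariance makes $\phi$ descend), so there is nothing to add.
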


\begin{lemma}
Let $\phi \in \End(G)$ and let $N \trianglelefteq G$ be a normal $\phi$-invariant subgroup. For any $\phi$-cellular automaton $\mathcal{T} : A^G \to A^G$ there exists a unique $\hat{\phi}$-cellular automaton $\widehat{\mathcal{T} } : A^{G/N} \to A^{G/N}$ such that the following diagram commutes: 
\[ \begin{tikzcd}
A^{G/N} \arrow{r}{\widehat{\mathcal{T} }} \arrow[swap]{d}{\rho^*} & A^{G/N} \arrow{d}{\rho^*} \\%
A^G \arrow{r}{\mathcal{T}}& A^G
\end{tikzcd}\]
\end{lemma}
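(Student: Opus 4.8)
The plan is to realize $\widehat{\mathcal{T}}$ as a conjugate of $\mathcal{T}$ by the injection $\rho^*$ and then verify it is a $\hat\phi$-cellular automaton through the generalized Curtis--Hedlund characterization. First I would record that, since $\rho$ is surjective, Lemma \ref{le-star}(2.) gives that $\rho^* : A^{G/N} \to A^G$ is injective; uniqueness of $\widehat{\mathcal{T}}$ is then immediate, because any two functions making the square commute satisfy $\rho^* \circ \widehat{\mathcal{T}}_1 = \mathcal{T}\circ\rho^* = \rho^*\circ\widehat{\mathcal{T}}_2$, whence $\widehat{\mathcal{T}}_1 = \widehat{\mathcal{T}}_2$. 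The key structural fact is that the image of $\rho^*$ is exactly $\Fix(N)$: for $\bar x \in A^{G/N}$, normality of $N$ shows $\rho^*(\bar x)$ is constant on each coset $gN$, hence $N$-periodic, and conversely every $N$-periodic configuration descends to a function on $G/N$. Thus $\rho^*$ restricts to a bijection $A^{G/N}\to\Fix(N)$.

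Next, since $N$ is $\phi$-invariant, Lemma \ref{le-fix} guarantees $\mathcal{T}(\Fix(N))\subseteq\Fix(N)$, so I may define $\widehat{\mathcal{T}} := (\rho^*)^{-1}\circ\mathcal{T}\circ\rho^*$, where $(\rho^*)^{-1}$ denotes the inverse of the corestriction of $\rho^*$ onto $\Fix(N)$. By construction $\rho^*\circ\widehat{\mathcal{T}} = \mathcal{T}\circ\rho^*$, so the diagram commutes. It remains to check that $\widehat{\mathcal{T}}$ is a $\hat\phi$-cellular automaton, which by Theorem \ref{th-old}(1.) amounts to continuity and $\hat\phi$-equivariance. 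For continuity, I would note that $A^{G/N}$ and $A^G$ are compact Hausdorff (as $A$ is finite) and $\Fix(N)$ is closed, so the continuous bijection $\rho^* : A^{G/N}\to\Fix(N)$ is a homeomorphism; hence $\widehat{\mathcal{T}}$, being a composite of continuous maps, is continuous.

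For equivariance I would exploit that $\rho^*$ is itself the $\rho$-cellular automaton associated to $\rho\in\Hom(G,G/N)$, so it is $\rho$-equivariant: $g\cdot\rho^*(\bar x) = \rho^*(\rho(g)\cdot\bar x)$ for all $g\in G$, $\bar x\in A^{G/N}$. Fix $\bar x$ and $\bar g=\rho(g)\in G/N$. Chaining $\rho$-equivariance, the commuting square, the $\phi$-equivariance of $\mathcal{T}$, and the identity $\hat\phi\circ\rho=\rho\circ\phi$ from Lemma \ref{le-endGN}, both $\rho^*(\bar g\cdot\widehat{\mathcal{T}}(\bar x))$ and $\rho^*(\widehat{\mathcal{T}}(\hat\phi(\bar g)\cdot\bar x))$ reduce to the single expression $\mathcal{T}(\phi(g)\cdot\rho^*(\bar x))$. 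Since $\rho^*$ is injective, this yields $\bar g\cdot\widehat{\mathcal{T}}(\bar x)=\widehat{\mathcal{T}}(\hat\phi(\bar g)\cdot\bar x)$, i.e.\ $\hat\phi$-equivariance.

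The main obstacle I anticipate is the equivariance computation: one must thread the four ingredients (the two instances of $\rho$-equivariance, the $\phi$-equivariance of $\mathcal{T}$, and $\hat\phi\circ\rho=\rho\circ\phi$) in the correct order and then cancel $\rho^*$ using its injectivity; a minor point requiring care is confirming that $\rho^*$ corestricts to a \emph{bijection} onto $\Fix(N)$, which is where normality of $N$ enters. An alternative, topology-free route would construct a local rule for $\widehat{\mathcal{T}}$ directly from a memory set $T$ of $\mathcal{T}$ by pushing it to $\rho(T)\subseteq G/N$, but verifying that such a local rule is well defined on cosets is more delicate, so I would favor the conjugation argument above.
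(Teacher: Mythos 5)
Your proposal is correct, and its core construction is exactly the paper's: both corestrict the injective $\rho^*$ (injective by Lemma \ref{le-star}(2.)) to a bijection onto its image $\Fix(N)$, invoke Lemma \ref{le-fix} to see that $\mathcal{T}$ preserves $\Fix(N)$, define $\widehat{\mathcal{T}} := (\rho^*_{\downarrow})^{-1} \circ \mathcal{T}\vert_{\Fix(N)} \circ \rho^*_{\downarrow}$, and obtain uniqueness by cancelling the injective $\rho^*$ on the left. Where you genuinely diverge is in verifying that this conjugate is a $\hat{\phi}$-cellular automaton. The paper argues algebraically: it factors $\mathcal{T} = \phi^* \circ \tau$ with $\tau \in \CA(A^G)$ via Lemma \ref{lemma:GCA-tau_phi}, derives the intertwining relation $\phi^* \circ \rho^* = \rho^* \circ (\hat{\phi})^*$ from Lemma \ref{le-star}(4.) and Lemma \ref{le-endGN}, and rewrites $\widehat{\mathcal{T}} = (\hat{\phi})^* \circ \bigl( (\rho^*_{\downarrow})^{-1} \circ \tau\vert_{\Fix(N)} \circ \rho^*_{\downarrow} \bigr)$, where the bracketed map is an $\id$-cellular automaton by the classical quotient result \cite[Prop. 1.6.1]{CSC10}; also, rather than arguing directly that the image of $\rho^*$ is $\Fix(N)$ as you do, the paper cites \cite[Prop. 1]{GCA}. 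You instead appeal to the generalized Curtis--Hedlund theorem (Theorem \ref{th-old}(1.)): continuity because $A^{G/N}$ is compact (here $A$ is finite by standing assumption, which your compactness step genuinely needs) and $\Fix(N)$ is closed, so the continuous bijection $\rho^*_{\downarrow}$ is a homeomorphism; and $\hat{\phi}$-equivariance by reducing both $\rho^*(\bar{g}\cdot\widehat{\mathcal{T}}(\bar{x}))$ and $\rho^*(\widehat{\mathcal{T}}(\hat{\phi}(\bar{g})\cdot\bar{x}))$ to $\mathcal{T}(\phi(g)\cdot\rho^*(\bar{x}))$ using the $\rho$-equivariance of $\rho^*$, the $\phi$-equivariance of $\mathcal{T}$, and $\hat{\phi}\circ\rho = \rho\circ\phi$ --- a computation that checks out (note surjectivity of $\rho$ is what lets you write every element of $G/N$ as $\rho(g)$). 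Your route is more self-contained and makes the topological content explicit; the paper's route offloads the work onto the known classical statement and, as a by-product, exhibits $\widehat{\mathcal{T}}$ explicitly as $(\hat{\phi})^*$ composed with the quotient of $\tau$, which keeps memory sets in view and feeds directly into Corollary \ref{quotient}.
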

\begin{proof}
Since the canonical projection $\rho : G \to G/N$ is surjective, it follows by Lemma \ref{le-star} that $\rho^* : A^{G/N} \to A^G$ is an injective $\rho$-cellular automaton. Moreover, as $\ker(\rho)=N$, it follows by \cite[Prop. 1]{GCA} that $\rho^{*}(A^{G/N}) = \Fix(N)$. Let $\rho^*_{\downarrow} : A^{G/N} \to \Fix(N)$ be the codomain restriction of $\rho^*$. 

For any $\phi$-cellular automaton $\mathcal{T} : A^G \to A^G$, Lemma \ref{le-fix} shows that $\mathcal{T}(\Fix(N)) \subseteq \Fix(N)$, so we may define $\widehat{\mathcal{T} } : A^{G/N} \to A^{G/N}$ by 
\[ \widehat{\mathcal{T} }  := (\rho^*_{\downarrow})^{-1} \circ (\mathcal{T})\vert_{\Fix(N)} \circ \rho^*_{\downarrow} . \] 
In order to show that $\widehat{\mathcal{T} }$ is a $\hat{\phi}$-cellular automaton, let $\tau \in \CA(A^G)$ be such that $\mathcal{T} = \phi^* \circ \tau$. By Lemmas \ref{le-star} and \ref{le-endGN}, 
\[ \phi^* \circ \rho^* = \rho^* \circ (\hat{\phi})^*, \]
and restricting to $\Fix(N)$ we see that
\[ (\rho^*_{\downarrow})^{-1} \circ \phi^* \vert_{\Fix(N)} = (\hat{\phi})^* \circ (\rho^*_{\downarrow})^{-1}. \]
Therefore,
\begin{align*}
\widehat{\mathcal{T} }  & = (\rho^*_{\downarrow})^{-1} \circ \phi^*\vert_{\Fix(N)} \circ \tau\vert_{\Fix(N)} \circ \rho^*_{\downarrow} \\
& =  (\hat{\phi})^* \circ (\rho^*_{\downarrow})^{-1} \circ \tau\vert_{\Fix(N)} \circ \rho^*_{\downarrow}.
\end{align*}
By \cite[Prop. 1.6.1]{CSC10}, $ (\rho^*_{\downarrow})^{-1} \circ \tau\vert_{\Fix(N)} \circ \rho^*_{\downarrow}$ is an $\id$-cellular automaton, so $\widehat{\mathcal{T} }$ is indeed a $\hat{\phi}$-cellular automaton. 

To show the uniqueness, assume that $\mathcal{S} : A^{G/N} \to A^{G/N}$ makes the diagram commute. Then, 
\[ \rho^*\circ \mathcal{S} = \mathcal{T} \circ \rho^* = \rho^*\circ \widehat{\mathcal{T} }.  \]
As $\rho^*$ is injective, it is left-cancellative, so $\mathcal{S} = \widehat{\mathcal{T} }$.
\end{proof}

\begin{corollary}\label{quotient}
Let $N$ be a fully invariant subgroup of $G$. The map $\mathcal{T} \mapsto \widehat{\mathcal{T} }$ from $\GCA(A^G)$ to $\GCA(A^{G/N})$ is a monoid homomorphism. 
\end{corollary}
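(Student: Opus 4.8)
The plan is to verify the two defining properties of a monoid homomorphism: that the map $\mathcal{T} \mapsto \widehat{\mathcal{T}}$ sends the identity of $\GCA(A^G)$ to the identity of $\GCA(A^{G/N})$, and that it respects composition, i.e. $\widehat{\mathcal{S} \circ \mathcal{T}} = \widehat{\mathcal{S}} \circ \widehat{\mathcal{T}}$ for all $\mathcal{S}, \mathcal{T} \in \GCA(A^G)$. The identity case is immediate: if $\mathcal{T} = \id_{A^G}$, then the defining relation $\rho^* \circ \widehat{\mathcal{T}} = \mathcal{T} \circ \rho^* = \rho^*$ together with the injectivity (hence left-cancellativity) of $\rho^*$ forces $\widehat{\mathcal{T}} = \id_{A^{G/N}}$.

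For the composition property, the key point is that the assignment $\widehat{(\cdot)}$ was characterized in the previous lemma as the \emph{unique} map making the square with the two vertical $\rho^*$ arrows commute. So I would exploit this uniqueness rather than manipulate the explicit formula $(\rho^*_{\downarrow})^{-1} \circ (\cdot)\vert_{\Fix(N)} \circ \rho^*_{\downarrow}$. Concretely, given $\mathcal{S}$ a $\psi$-cellular automaton and $\mathcal{T}$ a $\phi$-cellular automaton, the composite $\mathcal{S} \circ \mathcal{T}$ is a $(\phi \circ \psi)$-cellular automaton by Theorem \ref{th-old}(2.), and I would chase the diagram
\[
\begin{tikzcd}
A^{G/N} \arrow{r}{\widehat{\mathcal{T}}} \arrow[swap]{d}{\rho^*} & A^{G/N} \arrow{r}{\widehat{\mathcal{S}}} \arrow{d}{\rho^*} & A^{G/N} \arrow{d}{\rho^*} \\
A^G \arrow{r}{\mathcal{T}} & A^G \arrow{r}{\mathcal{S}} & A^G
\end{tikzcd}
\]
Both inner squares commute by the defining property of $\widehat{\mathcal{T}}$ and $\widehat{\mathcal{S}}$, so the outer rectangle commutes: $\rho^* \circ (\widehat{\mathcal{S}} \circ \widehat{\mathcal{T}}) = (\mathcal{S} \circ \mathcal{T}) \circ \rho^*$. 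But $\widehat{\mathcal{S} \circ \mathcal{T}}$ is by definition the unique map satisfying exactly this relation, so left-cancelling the injective $\rho^*$ gives $\widehat{\mathcal{S} \circ \mathcal{T}} = \widehat{\mathcal{S}} \circ \widehat{\mathcal{T}}$.

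The one genuine subtlety, and the reason the full invariance of $N$ is needed, is the following: the construction $\widehat{(\cdot)}$ in the preceding lemma was defined only for a \emph{fixed} endomorphism $\phi$ with $N$ being $\phi$-invariant. To even speak of $\widehat{\mathcal{T}}$, $\widehat{\mathcal{S}}$, and $\widehat{\mathcal{S}\circ\mathcal{T}}$ simultaneously as elements of the single monoid $\GCA(A^{G/N})$, I must know that $N$ is invariant under each of $\phi$, $\psi$, and $\phi\circ\psi$, so that the induced endomorphisms $\hat\phi$, $\hat\psi$, and $\widehat{\phi\circ\psi} = \hat\phi\circ\hat\psi$ all exist on $G/N$ via Lemma \ref{le-endGN}. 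This is precisely where I invoke that $N$ is fully invariant: invariance under every endomorphism of $G$ guarantees $\widehat{(\cdot)}$ is defined on all of $\GCA(A^G)$, making the map a well-defined function between the two monoids. The main obstacle is therefore not the diagram chase, which is routine, but ensuring this well-definedness hypothesis is explicitly in place before the uniqueness argument can be applied uniformly across all elements of $\GCA(A^G)$.
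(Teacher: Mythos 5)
Your proof is correct, but it takes a genuinely different route from the paper's. The paper disposes of the corollary in one line: since $\widehat{\mathcal{T}} = (\rho^*_{\downarrow})^{-1} \circ \mathcal{T}\vert_{\Fix(N)} \circ \rho^*_{\downarrow}$, the map $\mathcal{T} \mapsto \widehat{\mathcal{T}}$ is conjugation by the bijection $\rho^*_{\downarrow} : A^{G/N} \to \Fix(N)$, hence automatically a monoid homomorphism; the step that makes conjugation multiplicative, namely $(\mathcal{S} \circ \mathcal{T})\vert_{\Fix(N)} = \mathcal{S}\vert_{\Fix(N)} \circ \mathcal{T}\vert_{\Fix(N)}$, is exactly where full invariance of $N$ enters, since Lemma \ref{le-fix} then guarantees that $\Fix(N)$ is invariant under \emph{every} element of $\GCA(A^G)$. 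You deliberately avoid this explicit formula and instead argue from the universal property: $\widehat{\mathcal{T}}$ is the unique map satisfying $\rho^* \circ \widehat{\mathcal{T}} = \mathcal{T} \circ \rho^*$, so pasting the two commuting squares and left-cancelling $\rho^*$ (legitimate among arbitrary maps, since $\rho^*$ is injective by Lemma \ref{le-star}, which is also how the paper proves the uniqueness clause) yields $\widehat{\mathcal{S} \circ \mathcal{T}} = \widehat{\mathcal{S}} \circ \widehat{\mathcal{T}}$, with the identity case handled the same way. The paper's route buys brevity at the cost of peeking into the construction; your route is construction-agnostic and more modular, and it makes explicit two points the paper leaves implicit: preservation of the identity, and the fact that full invariance is what makes $\widehat{(\cdot)}$ a globally defined map on $\GCA(A^G)$ at all. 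Your bookkeeping on that last point is sound, including the order of composition: if $\mathcal{T}$ is a $\phi$-cellular automaton and $\mathcal{S}$ a $\psi$-cellular automaton, then $\mathcal{S} \circ \mathcal{T}$ is a $(\phi \circ \psi)$-cellular automaton by Theorem \ref{th-old} (2.), and $\widehat{\phi \circ \psi} = \hat{\phi} \circ \hat{\psi}$ by the uniqueness in Lemma \ref{le-endGN}, so $\widehat{\mathcal{S}} \circ \widehat{\mathcal{T}}$ is a bona fide element of $\GCA(A^{G/N})$ before uniqueness is invoked. A small extra dividend of your formulation: since the characterizing equation $\rho^* \circ \widehat{\mathcal{T}} = \mathcal{T} \circ \rho^*$ never mentions $\phi$, it shows $\widehat{\mathcal{T}}$ does not depend on which homomorphism $\mathcal{T}$ happens to be equivariant for (relevant, e.g., for constant GCA, which are $\psi$-cellular automata for every $\psi$ by Lemma \ref{le-constant}) --- a fact the paper's conjugation formula makes manifest in a different way.
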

\begin{proof}
The map $\mathcal{T} \mapsto \widehat{\mathcal{T} }$ is a monoid homomorphism because it is induced by conjugation by $\rho^*_{\downarrow} : A^{G/N} \to \Fix(N)$.
\end{proof}

\begin{remark}
Unlike Proposition 1.6.2 in \cite{CSC10}, where a monoid epimorphism from $\CA(A^G)$ to $\CA(A^{G/N})$ is defined, the homomorphism from $\GCA(A^G)$ to $\GCA(A^{G/N})$ of Corollary \ref{quotient} may not be surjective since the homomorphism $\End(G) \to \End(G/N)$ given by $\phi \mapsto \hat{\phi}$ may not be surjective. 
\end{remark}


\section{Induction and restriction}

This section is inspired by the induction and restriction of classical cellular automata (see \cite{CSC09} and \cite[Sec. 1.7]{CSC10}). 

For a subgroup $K \leq H$ and $\phi\in \Hom(H,G)$, let $\mathcal{T}: A^G\to A^{H}$ be a $\phi$-cellular automaton with memory set $T\subseteq \phi(K)$ and local function $\mu:A^{T}\to A$. Let $\phi \vert_K^{\phi(K)} : K \to \phi(K)$ be the domain and codomain restriction of $\phi$. Define the \emph{restriction} of $\mathcal{T}$ with respect to $K$ as the $\phi \vert_K^{\phi(K)}$-cellular automaton $\mathcal{T}_{K}: A^{\phi(K)}\to A^{K}$  defined by
\[ \mathcal{T}_{K}(x)(k):=\mu((\phi(k^{-1}) \cdot x)|_{T }), \]
for all $x \in A^{\phi(K)}$ and $k\in K$. 

\begin{lemma}\label{le-restriction}
With the above notation, $\mathcal{T}_K$ is the unique $\phi \vert_K^{\phi(K)}$-cellular automaton such that the following diagram commutes
\[ \begin{tikzcd}
A^{G} \arrow{r}{\mathcal{T}} \arrow[swap]{d}{\Res_{\phi(K)}} & A^{H} \arrow{d}{\Res_K} \\%
A^{\phi(K)} \arrow{r}{\mathcal{T}_K}& A^K
\end{tikzcd}\]
where $\Res_{\phi(K)}$ and $\Res_K$ are the natural restriction maps. 
\end{lemma}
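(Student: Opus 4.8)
The plan is to check three things in turn: that $\mathcal{T}_K$ is genuinely a $\phi\vert_K^{\phi(K)}$-cellular automaton, that it makes the square commute, and that it is the only map doing so. The first is essentially bookkeeping. Since $T \subseteq \phi(K)$ is a finite subset of the subgroup $\phi(K)$ and $\mu : A^T \to A$ is a local function, the defining formula $\mathcal{T}_K(x)(k) = \mu((\phi(k^{-1}) \cdot x)\vert_T)$ is exactly the defining equation \eqref{defGCA} for a cellular automaton $A^{\phi(K)} \to A^K$ relative to the homomorphism $\phi\vert_K^{\phi(K)} : K \to \phi(K)$, with memory set $T$ and local function $\mu$; here one only notes that $\phi\vert_K^{\phi(K)}(k^{-1}) = \phi(k^{-1})$, so no ambiguity arises in which shift is meant.

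For commutativity, I would fix $x \in A^G$ and $k \in K$ and compare the two composites evaluated at $k$. The composite $\Res_K \circ \mathcal{T}$ gives $\mathcal{T}(x)(k) = \mu((\phi(k^{-1}) \cdot x)\vert_T)$, where the shift is that of $G$ on $A^G$, while $\mathcal{T}_K \circ \Res_{\phi(K)}$ gives $\mu((\phi(k^{-1}) \cdot x\vert_{\phi(K)})\vert_T)$, where now the shift is that of $\phi(K)$ on $A^{\phi(K)}$. It therefore suffices to show these two elements of $A^T$ coincide. The key observation is that for every $t \in T$, both $\phi(k)$ and $t$ lie in the subgroup $\phi(K)$, whence $\phi(k)t \in \phi(K)$, so
\[ (\phi(k^{-1}) \cdot x)(t) = x(\phi(k)t) = x\vert_{\phi(K)}(\phi(k)t) = (\phi(k^{-1}) \cdot x\vert_{\phi(K)})(t). \]
This is precisely where the hypothesis $T \subseteq \phi(K)$ enters, and I expect it to be the only point requiring genuine care; the rest is formal.

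For uniqueness, I would use that $\Res_{\phi(K)} : A^G \to A^{\phi(K)}$ is surjective, since any configuration on $\phi(K)$ extends to one on $G$ (e.g.\ by setting it equal to $0$ on $G \setminus \phi(K)$), and hence is right-cancellative. If $\mathcal{S} : A^{\phi(K)} \to A^K$ is any $\phi\vert_K^{\phi(K)}$-cellular automaton making the diagram commute, then $\mathcal{S} \circ \Res_{\phi(K)} = \Res_K \circ \mathcal{T} = \mathcal{T}_K \circ \Res_{\phi(K)}$, and right-cancelling $\Res_{\phi(K)}$ yields $\mathcal{S} = \mathcal{T}_K$. Thus the substantive content is confined to the commutativity step and, within it, to the closure argument $\phi(k)t \in \phi(K)$; the remaining verifications are routine.
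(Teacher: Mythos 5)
Your proposal is correct and follows essentially the same route as the paper: verify commutativity by evaluating both composites at $k \in K$, and obtain uniqueness by extending an arbitrary $y \in A^{\phi(K)}$ to $\hat{y} \in A^G$ (i.e., right-cancelling the surjective map $\Res_{\phi(K)}$). In fact you make explicit the one point the paper leaves implicit, namely the closure argument $\phi(k)t \in \phi(K)$ showing that $(\phi(k^{-1}) \cdot x)\vert_T$ depends only on $x\vert_{\phi(K)}$.
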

\begin{proof}
Let $\mu : A^T \to A$ be the local function of $\mathcal{T}$. For all $x \in A^G$ and $k \in K$ we have,
\[ ( \mathcal{T}(x) \vert_K)(k) = \mu( (\phi(k^{-1}) \cdot x) \vert_T) = \mathcal{T}_K( x \vert_{\phi(K)})(k), \]
so the diagram commutes. To show uniqueness, suppose that $\mathcal{T}^\prime : A^{\phi(K)} \to A^K$ makes the diagram commute. For any $y \in A^{\phi(K)}$, consider an extension $\hat{y} \in A^G$. Then, for all $k \in K$,
\[\mathcal{T}^\prime(y)(k) = \mathcal{T}^\prime( \hat{y} \vert_{\phi(K)})(k) = (\mathcal{T}(\hat{y}) \vert_K )(k) = \mathcal{T}_K(\hat{y} \vert_{\phi(K)})(k) = \mathcal{T}_K(y)(k). \]
Therefore, $\mathcal{T}^\prime = \mathcal{T}_K$. 
\end{proof}

Now, for $\phi \in \Hom(H,G)$, consider a $\phi \vert_K^{\phi(K)}$-cellular automaton $\mathcal{S} : A^{\phi(K)}\to A^{K}$ with memory set $S\subseteq \phi(K)$ and local function $\nu : A^{S}\to A$. Define the \emph{induction} of $\sigma$ with respect to $H$ as the $\phi$-cellular automaton $\mathcal{S} ^{H}: A^{G}  \to  A^{H}$ defined by
\[ \mathcal{S} ^{H}(x)(h):=\nu ((\phi(h^{-1}) \cdot x)|_{S}), \]
for all $x\in A^{G}$ and $g\in G$. 

\begin{lemma}
With the above notation, $\mathcal{S}^H$ is the unique $\phi$-cellular automaton such that the following diagram commutes
\[ \begin{tikzcd}
A^{G} \arrow{r}{\mathcal{S}^H} \arrow[swap]{d}{\Res_{\phi(K)}} & A^{H} \arrow{d}{\Res_K} \\%
A^{\phi(K)} \arrow{r}{\mathcal{S}}& A^K
\end{tikzcd}\]
\end{lemma}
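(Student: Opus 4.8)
The statement to prove asserts that the induction $\mathcal{S}^H$ is the unique $\phi$-cellular automaton making the restriction diagram commute. The plan is to mirror the structure of the proof of Lemma \ref{le-restriction}, since induction and restriction are dual constructions and the diagram is essentially the same one read in the opposite direction. First I would verify commutativity: starting from an arbitrary $x \in A^G$, I compute both composites applied to $x$ and evaluate at an arbitrary $k \in K$. Along the top-right path, $(\Res_K \circ \mathcal{S}^H)(x)(k) = (\mathcal{S}^H(x)\vert_K)(k) = \mathcal{S}^H(x)(k)$, which by the defining formula equals $\nu((\phi(k^{-1}) \cdot x)\vert_S)$. Along the left-bottom path, $(\mathcal{S} \circ \Res_{\phi(K)})(x)(k) = \mathcal{S}(x\vert_{\phi(K)})(k)$; since the memory set $S$ is contained in $\phi(K)$, the restriction of $(\phi(k^{-1})\cdot x)$ to $S$ depends only on $x\vert_{\phi(K)}$, and $\phi(k^{-1})$ equals $(\phi\vert_K^{\phi(K)})(k^{-1})$, so this also evaluates to $\nu((\phi(k^{-1})\cdot x)\vert_S)$. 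Hence the two paths agree.

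The key technical point to make explicit is that evaluating $\mathcal{S}$ at $k \in K$ uses the shift action of $\phi(K)$ on $A^{\phi(K)}$ via the homomorphism $\phi\vert_K^{\phi(K)}$, and that this restricted shift agrees with the ambient shift of $G$ once we pass to the memory set $S \subseteq \phi(K)$. Concretely, for $s \in S \subseteq \phi(K)$ we have $(\phi(k^{-1})\cdot x)(s) = x(\phi(k)s)$, and since $\phi(k)s \in \phi(K)$, this value is read off from $x\vert_{\phi(K)}$. This is the one step where one must check that the domain restriction of $\phi$ does not cause a mismatch; it is routine but is the natural place for an off-by-a-translation error, so I would write it out carefully.

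For uniqueness, I would argue exactly as in Lemma \ref{le-restriction} but in the reverse direction. Suppose $\mathcal{T}' : A^G \to A^H$ is another $\phi$-cellular automaton making the diagram commute. The difficulty here, relative to the restriction case, is that commutativity of the diagram only constrains $\mathcal{T}'(x)\vert_K$, i.e. the values of $\mathcal{T}'(x)$ on the subgroup $K$, not on all of $H$. So the diagram alone does not pin down $\mathcal{T}'$; uniqueness must come from combining the diagram with $\phi$-equivariance. The plan is: the diagram gives $\mathcal{T}'(x)(k) = \mathcal{S}(x\vert_{\phi(K)})(k) = \mathcal{S}^H(x)(k)$ for all $k \in K$ and all $x \in A^G$. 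To extend this equality from $K$ to all of $H$, I would use $\phi$-equivariance of both $\mathcal{T}'$ and $\mathcal{S}^H$: for arbitrary $h \in H$, write $\mathcal{T}'(x)(h) = (h^{-1}\cdot \mathcal{T}'(x))(e) = \mathcal{T}'(\phi(h^{-1})\cdot x)(e)$, and note $e \in K$, so the diagram applies at the identity and gives $\mathcal{T}'(\phi(h^{-1})\cdot x)(e) = \mathcal{S}((\phi(h^{-1})\cdot x)\vert_{\phi(K)})(e)$; the same computation for $\mathcal{S}^H$ yields the identical right-hand side. Hence $\mathcal{T}'(x)(h) = \mathcal{S}^H(x)(h)$ for all $h \in H$, giving $\mathcal{T}' = \mathcal{S}^H$.

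I expect the main obstacle to be precisely this uniqueness step: unlike restriction, where the codomain $A^K$ is determined pointwise by the diagram, here the diagram underdetermines $\mathcal{T}'$ on $H \setminus K$, and one genuinely needs $\phi$-equivariance to transport the agreement on $K$ out to all of $H$. The role of $e \in K$ is essential, as it is the single point of $K$ at which equivariance lets us reduce an arbitrary evaluation at $h \in H$ back into the region controlled by the diagram. Everything else — the commutativity computation and the bookkeeping of the restricted homomorphism $\phi\vert_K^{\phi(K)}$ — is routine translation-chasing.
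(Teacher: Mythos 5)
Your proof is correct, and it supplies exactly what the paper leaves implicit: the paper's own proof of this lemma is the single line ``analogous to the proof of Lemma \ref{le-restriction}'', and your argument is the natural fleshing-out of that remark. The one substantive point, which you identify correctly, is that the analogy is not literal at the uniqueness step. For restriction, uniqueness holds among \emph{all} maps making the diagram commute, because $\Res_{\phi(K)}: A^G \to A^{\phi(K)}$ is surjective (every $y \in A^{\phi(K)}$ extends to some $\hat{y} \in A^G$) and so the diagram determines $\mathcal{T}_K$ pointwise; for induction, the diagram only constrains $\mathcal{T}'(x)\vert_K$, so the hypothesis that $\mathcal{T}'$ is a $\phi$-cellular automaton is genuinely needed, and your use of $\phi$-equivariance (legitimate by Theorem \ref{th-old}(1), or equivalently by evaluating the defining formula (\ref{defGCA}) of $\mathcal{T}'$ with its own memory set and local function at $e$) to transport the agreement on $K$ to all of $H$ is exactly the right mechanism. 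Your commutativity computation also handles the only delicate point there, namely that the $\phi\vert_K^{\phi(K)}$-shift on $A^{\phi(K)}$ agrees with the ambient $G$-shift on the memory set $S \subseteq \phi(K)$ because $\phi(k)s \in \phi(K)$. One cosmetic quibble: $e$ is not the ``single point'' of $K$ that works; any $k_0 \in K$ would serve via $\mathcal{T}'(x)(h) = \bigl(k_0 h^{-1} \cdot \mathcal{T}'(x)\bigr)(k_0) = \mathcal{T}'\bigl(\phi(k_0 h^{-1}) \cdot x\bigr)(k_0)$, though $e$ is of course the canonical choice. This does not affect the validity of the argument.
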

\begin{proof}
This is analogous to the proof of Lemma \ref{le-restriction}.
\end{proof}

When $G=H$ and $\phi = \id$, the restriction and induction of $\id$-cellular automata coincide with the restriction and induction defined in \cite[Sec. 1.7.]{CSC10}.

The proof of the next lemma follows immediately from the definition of restriction and induction.

\begin{lemma}\label{lemma:(tau|_K)^(G,H)=tau}
Let $K \leq H$ be a subgroup and $\phi\in \Hom(H,G)$.
\begin{enumerate}
\item For every $\phi$-cellular automaton $\mathcal{T}: A^G\to A^{H}$ with memory set contained in $\phi(K)$,
\[ (\mathcal{T}_{K})^{H}=\mathcal{T}. \]

\item For every $\phi \vert_K^{\phi(K)}$-cellular automaton $\mathcal{S}: A^{\phi(K)} \to A^{K}$, 
\[ (\mathcal{S}^{H})_{K}= \mathcal{S}. \]
\end{enumerate}
\end{lemma}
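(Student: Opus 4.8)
The plan is to track, through each of the two constructions, what the memory set and local function of the resulting GCA are, and then to observe that feeding those data into the opposite construction reproduces the defining local rule of the original automaton. The crucial point is that neither restriction nor induction alters the local function or the memory set: each merely reinterprets the same rule $\mu$ (respectively $\nu$) on the same finite set $T \subseteq \phi(K)$ (respectively $S \subseteq \phi(K)$) as a rule acting on configurations over a different group.

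For part (1), I would first record that, by definition, the restriction $\mathcal{T}_K : A^{\phi(K)} \to A^K$ is the $\phi\vert_K^{\phi(K)}$-cellular automaton with the same memory set $T \subseteq \phi(K)$ and the same local function $\mu : A^T \to A$ as $\mathcal{T}$. Since $T \subseteq \phi(K)$, the induction of $\mathcal{T}_K$ back to $H$ is legitimately defined, and by the induction formula,
\[ (\mathcal{T}_K)^H(x)(h) = \mu\bigl( (\phi(h^{-1}) \cdot x)\vert_T \bigr), \]
for all $x \in A^G$ and $h \in H$. This is precisely the defining equation of $\mathcal{T}$, whence $(\mathcal{T}_K)^H = \mathcal{T}$.

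For part (2), I would proceed symmetrically: the induction $\mathcal{S}^H : A^G \to A^H$ has memory set $S \subseteq \phi(K)$ and local function $\nu : A^S \to A$, so the restriction $(\mathcal{S}^H)_K$ is again defined and given by
\[ (\mathcal{S}^H)_K(x)(k) = \nu\bigl( (\phi(k^{-1}) \cdot x)\vert_S \bigr), \]
for all $x \in A^{\phi(K)}$ and $k \in K$. The only point requiring a line of care is that $\mathcal{S}$ is a $\phi\vert_K^{\phi(K)}$-cellular automaton, so its defining rule uses the shift by $\phi\vert_K^{\phi(K)}(k^{-1})$; but this equals $\phi(k^{-1})$ by definition of the domain-codomain restriction, so the displayed expression coincides with $\mathcal{S}(x)(k)$ and hence $(\mathcal{S}^H)_K = \mathcal{S}$.

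There is no genuine obstacle here; the statement is a bookkeeping verification, as the paper anticipates. The only things worth checking explicitly — and the only places where an error could slip in — are that the memory set remains contained in $\phi(K)$ after the first construction, so that the second construction is applicable, and that the homomorphism $\phi\vert_K^{\phi(K)}$ restricts the shift action exactly as $\phi$ does on elements of $K$.
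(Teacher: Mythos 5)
Your proposal is correct and is essentially the paper's argument spelled out: the paper gives no explicit proof, stating only that the lemma ``follows immediately from the definition of restriction and induction,'' and your unwinding of the definitions (same memory set $T$ or $S$, same local function $\mu$ or $\nu$, with $\phi\vert_K^{\phi(K)}(k^{-1})=\phi(k^{-1})$) is precisely that intended verification. Your explicit checks that the memory set stays inside $\phi(K)$ and that the restricted homomorphism shifts exactly as $\phi$ does are the right points of care and match the conventions set up in the paper's Section 5.
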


\begin{proposition}\label{indres-comp}
Let $G$, $H$, and $R$ be groups, and let $\phi \in \Hom(H,R)$ and $\psi \in \Hom(R,G)$. Let $K \leq H$ be a subgroup. Let $\mathcal{T}: A^{R} \to A^{H}$ be a $\phi$-cellular automaton with memory set $T \subseteq \phi(K)$ and let $\mathcal{S} : A^{G} \to A^{R}$ be a $\psi$-cellular automaton with memory set contained in $S \subseteq \psi(\phi(K))$. Then,
\[ (\mathcal{T} \circ\mathcal{S})_{K}=\mathcal{T}_{K}\circ \mathcal{S}_{\phi(K)}. \]
\end{proposition}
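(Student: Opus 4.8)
The plan is to deduce the identity from the uniqueness clause of Lemma~\ref{le-restriction}, applied to the composite $\mathcal{T}\circ\mathcal{S}$, rather than by manipulating local functions directly. First I would check that the left-hand side is even defined. By Theorem~\ref{th-old}~(2.), $\mathcal{T}\circ\mathcal{S}:A^G\to A^H$ is a $(\psi\circ\phi)$-cellular automaton with memory set $\psi(T)S$. Now $(\psi\circ\phi)(K)=\psi(\phi(K))$ is a subgroup of $G$, being the image of the subgroup $K$ under the homomorphism $\psi\circ\phi$. Since $T\subseteq\phi(K)$ we get $\psi(T)\subseteq\psi(\phi(K))=(\psi\circ\phi)(K)$, and by hypothesis $S\subseteq\psi(\phi(K))=(\psi\circ\phi)(K)$, so by closure $\psi(T)S\subseteq(\psi\circ\phi)(K)$. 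Thus the memory set of $\mathcal{T}\circ\mathcal{S}$ lies in $(\psi\circ\phi)(K)$, the restriction $(\mathcal{T}\circ\mathcal{S})_K:A^{(\psi\circ\phi)(K)}\to A^K$ is well-defined, and it is characterized as \emph{the unique} cellular automaton making the defining square of Lemma~\ref{le-restriction} commute.

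Next I would stack the two restriction squares given by Lemma~\ref{le-restriction}: the square for $\mathcal{S}$ restricted to $\phi(K)\leq R$ to the left of the square for $\mathcal{T}$ restricted to $K\leq H$, namely
\[ \begin{tikzcd}
A^G \arrow{r}{\mathcal{S}} \arrow[swap]{d}{\Res_{\psi(\phi(K))}} & A^R \arrow{r}{\mathcal{T}} \arrow{d}{\Res_{\phi(K)}} & A^H \arrow{d}{\Res_K} \\
A^{\psi(\phi(K))} \arrow{r}{\mathcal{S}_{\phi(K)}} & A^{\phi(K)} \arrow{r}{\mathcal{T}_K} & A^K
\end{tikzcd}\]
The two inner squares commute by Lemma~\ref{le-restriction}, so the outer rectangle commutes; that is, $\Res_K\circ(\mathcal{T}\circ\mathcal{S})=(\mathcal{T}_K\circ\mathcal{S}_{\phi(K)})\circ\Res_{\psi(\phi(K))}$. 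The crucial bookkeeping point is that the shared middle column $\Res_{\phi(K)}$ matches the two squares along $A^{\phi(K)}$, and that the leftmost column $\Res_{\psi(\phi(K))}$ is literally $\Res_{(\psi\circ\phi)(K)}$ because $\psi(\phi(K))=(\psi\circ\phi)(K)$; this is exactly the left vertical map occurring in the defining square of $(\mathcal{T}\circ\mathcal{S})_K$.

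Finally, by Theorem~\ref{th-old}~(2.) the composite $\mathcal{T}_K\circ\mathcal{S}_{\phi(K)}$ is itself a cellular automaton whose associated homomorphism is $\phi\vert_K^{\phi(K)}$ followed by $\psi\vert_{\phi(K)}^{\psi(\phi(K))}$, which is precisely $(\psi\circ\phi)\vert_K^{(\psi\circ\phi)(K)}$. Hence $\mathcal{T}_K\circ\mathcal{S}_{\phi(K)}$ is a $(\psi\circ\phi)\vert_K^{(\psi\circ\phi)(K)}$-cellular automaton that makes the defining square of $(\mathcal{T}\circ\mathcal{S})_K$ commute, and the uniqueness clause of Lemma~\ref{le-restriction} forces $(\mathcal{T}\circ\mathcal{S})_K=\mathcal{T}_K\circ\mathcal{S}_{\phi(K)}$. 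I expect the only real friction to be the type bookkeeping—confirming the memory-set containment $\psi(T)S\subseteq(\psi\circ\phi)(K)$ and that the two leftmost restriction maps genuinely coincide—rather than any conceptual obstacle, since the argument is diagram chasing anchored by an already-established uniqueness statement.
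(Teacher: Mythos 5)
Your proof is correct, and it takes a genuinely different (though closely related) route from the paper's. After the same memory-set bookkeeping ($\psi(T)S \subseteq \psi(\phi(K))$ via Theorem~\ref{th-old}~(2.), so that $(\mathcal{T}\circ\mathcal{S})_K$ is defined), the paper argues pointwise: for $x \in A^{\psi(\phi(K))}$ it picks an extension $\hat{x} \in A^G$, observes that $\mathcal{S}(\hat{x})$ extends $\mathcal{S}_{\phi(K)}(x)$ and that $\mathcal{T}(\mathcal{S}(\hat{x}))$ extends $\mathcal{T}_K(\mathcal{S}_{\phi(K)}(x))$, and evaluates both sides at each $k \in K$ --- in effect redoing by hand the extension argument that underlies Lemma~\ref{le-restriction}. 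You instead paste the two commuting squares supplied by Lemma~\ref{le-restriction} to get $\Res_K \circ (\mathcal{T}\circ\mathcal{S}) = (\mathcal{T}_K \circ \mathcal{S}_{\phi(K)}) \circ \Res_{\psi(\phi(K))}$ and then invoke the uniqueness clause of that lemma applied to $(\mathcal{T}\circ\mathcal{S})_K$. Your step verifying via Theorem~\ref{th-old}~(2.) that $\mathcal{T}_K \circ \mathcal{S}_{\phi(K)}$ is a $(\psi\circ\phi)\vert_K^{(\psi\circ\phi)(K)}$-cellular automaton is exactly what is needed to apply the lemma as stated, since its uniqueness is formulated among cellular automata of that type; in fact the lemma's uniqueness proof never uses that hypothesis (it only uses that every configuration on the subgroup extends to the whole group, i.e., that the left restriction map is surjective), so your argument would go through even without that verification, but including it is the careful choice. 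What your route buys is modularity: the element-level chase is done once, inside Lemma~\ref{le-restriction}, and the proposition reduces to diagram pasting plus the composition theorem; the paper's route is self-contained at the cost of repeating the extension trick. Both proofs ultimately rest on the same two facts --- the memory-set containment and surjectivity of the restriction maps --- so the difference is organizational rather than substantive, with yours arguably the cleaner packaging.
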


\begin{proof}
By Theorem \ref{th-old} (2.), $\mathcal{T} \circ\mathcal{S} : A^G \to A^H$ is a $(\psi \circ \phi)$-cellular automaton with memory set $\psi(T)S$. Since $T \subseteq \phi(K)$, then $\psi(T) \subseteq \psi(\phi(K))$, so $\psi(T)S \subseteq \psi (\phi(K))$, as $\psi(\phi(K))$ is a subgroup of $R$. Hence, we may consider the restriction $(\mathcal{T} \circ\mathcal{S})_{K} : A^{\psi (\phi(K))} \to A^K$. 

For each $x \in A^{\psi(\phi(K))}$, consider an extension $\hat{x} \in A^G$. Then, for all $k \in K$ and $x \in A^{\psi(\phi(K))}$, we have
\[ (\mathcal{T} \circ\mathcal{S})_{K}(x)(k)=(\mathcal{T} \circ\mathcal{S})(\hat{x})(k). \]
On the other hand, for all $x \in  A^{\psi(\phi(K))}$, note that $\mathcal{S}(\hat{x}) \in A^R$ is an extension of $\mathcal{S}_{\phi(K)}(x) \in A^{\phi(K)}$, and $\mathcal{T}(\mathcal{S}(\hat{x})) \in A^H$ is an extension of $\mathcal{T}_{K}(\mathcal{S}_{\phi(K)}(x)) \in A^K$. Hence, for all $k \in K$ and $x \in A^{\psi(\phi(K))}$,
\[ \mathcal{T}_{K}(\mathcal{S}_{\phi(K)}(x))(k)=\mathcal{T}(\mathcal{S}(\hat{x}))(k). \]
This shows that for all $k \in K$ and $x \in A^{\psi(\phi(K))}$,
\[  (\mathcal{T} \circ \mathcal{S})_{K}(x)(k) = \mathcal{T}_{K}(\mathcal{S}_{\phi(K)}(x))(k). \]
The result follows. 
\end{proof}

\begin{lemma}
Let $\phi \in \Hom(H,G)$ and $K\leq H$. Then,
\[ \phi^*_K = (\phi \vert_K^{\phi(K)})^*, \]
where $\phi \vert_K^{\phi(K)} : K \to \phi(K)$ is the domain and codomain restriction of $\phi$. 
\end{lemma}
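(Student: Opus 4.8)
The plan is to verify the identity by a direct pointwise computation, after first checking that the left-hand side is even defined.

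First I would record that $\phi^*$ admits $\{e\}$ as a memory set with local function $\id_A : A^{\{e\}} \to A$ given by $\id_A(y) = y(e)$, as noted just before Lemma \ref{le-star}. Since every subgroup contains the identity of $G$, we have $\{e\} \subseteq \phi(K)$, so the memory set of $\phi^*$ is contained in $\phi(K)$ and the restriction $\phi^*_K : A^{\phi(K)} \to A^K$ is well-defined.

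Next I would unwind both sides at an arbitrary $x \in A^{\phi(K)}$ and $k \in K$. For the left-hand side, the defining formula for restriction gives
\[ \phi^*_K(x)(k) = \id_A\big((\phi(k^{-1}) \cdot x)|_{\{e\}}\big) = (\phi(k^{-1}) \cdot x)(e), \]
and the definition of the shift action reduces this to $x(\phi(k^{-1})^{-1}) = x(\phi(k))$. For the right-hand side, the definition of the $*$-construction applied to the restricted homomorphism yields at once
\[ (\phi\vert_K^{\phi(K)})^*(x)(k) = x\big(\phi\vert_K^{\phi(K)}(k)\big) = x(\phi(k)). \]
Since both expressions equal $x(\phi(k))$ for all $x$ and $k$, the two maps coincide.

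There is no genuine obstacle here: the result is a routine unwinding of definitions, and the only points that require any care are confirming $\{e\} \subseteq \phi(K)$ so that $\phi^*_K$ makes sense, and correctly handling the inverse in the shift action when simplifying $(\phi(k^{-1}) \cdot x)(e)$. A slightly more conceptual alternative would invoke the uniqueness clause of Lemma \ref{le-restriction}: one checks that $(\phi\vert_K^{\phi(K)})^*$ makes the restriction square commute, i.e.\ that $\Res_K \circ \phi^* = (\phi\vert_K^{\phi(K)})^* \circ \Res_{\phi(K)}$, which reduces to the same equality $x(\phi(k)) = x(\phi(k))$, and then concludes by uniqueness that it must equal $\phi^*_K$.
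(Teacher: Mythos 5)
Your proposal is correct and follows essentially the same route as the paper, which proves the identity by the same one-line pointwise computation $\phi^*_K(x)(k) = x(\phi(k)) = (\phi\vert_K^{\phi(K)})^*(x)(k)$. Your additional checks (that the memory set $\{e\} \subseteq \phi(K)$ makes $\phi^*_K$ well-defined, and the explicit handling of the shift-action inverse) are sound refinements of details the paper leaves implicit, not a different argument.
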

\begin{proof}
For any $x \in A^{\phi(K)}$ and any $k \in K$, we have
\[ \phi^*_K(x) (k) = x( \phi(k)) = x( \phi \vert_K^{\phi(K)}(k)) = (\phi \vert_K^{\phi(K)})^*(x)(k).  \] 
\end{proof}

\begin{corollary}\label{cor-phi-res}
Let $\phi \in \Hom(H,G)$ and $K\leq H$. 
\begin{enumerate}
\item $\phi_K^* : A^{\phi(K)} \to A^K$ is injective.
\item If $\phi^*$ is surjective, then $\phi_K^*$ is surjective. 
\item If $\ker(\phi) \leq K$ and $\phi_K^*$ is surjective, then $\phi^*$ is surjective. 
\end{enumerate}
\end{corollary}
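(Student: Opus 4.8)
The plan is to reduce all three statements to purely group-theoretic properties of the restricted homomorphism $\bar\phi := \phi\vert_K^{\phi(K)} : K \to \phi(K)$ by invoking the preceding lemma, which gives $\phi_K^* = \bar\phi^*$, and then to apply parts (2.) and (3.) of Lemma \ref{le-star} to $\bar\phi$ in each case. Since $\bar\phi$ is an honest homomorphism between the groups $K$ and $\phi(K)$, Lemma \ref{le-star} translates its injectivity and surjectivity directly into surjectivity and injectivity of $\bar\phi^* = \phi_K^*$, and there is essentially nothing else to do beyond checking which of these properties $\bar\phi$ enjoys.

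For (1.), I would note that $\bar\phi$ is surjective by construction, as its codomain is exactly the image $\phi(K)$; Lemma \ref{le-star} (2.) then gives that $\bar\phi^* = \phi_K^*$ is injective. For (2.), if $\phi^*$ is surjective then $\phi$ is injective by Lemma \ref{le-star} (3.), and the domain/codomain restriction of an injective homomorphism is again injective, so $\bar\phi$ is injective; applying Lemma \ref{le-star} (3.) to $\bar\phi$ yields that $\phi_K^* = \bar\phi^*$ is surjective. Both of these are immediate once the reduction to $\bar\phi$ is in place.

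The one step that genuinely requires care is (3.), and it is the reason this part carries the extra hypothesis $\ker(\phi)\le K$ that (1.) and (2.) do not. Assuming $\phi_K^* = \bar\phi^*$ is surjective, Lemma \ref{le-star} (3.) gives that $\bar\phi$ is injective; but injectivity of $\bar\phi$ only says $\ker(\phi)\cap K = \{e\}$, i.e.\ it controls the kernel of $\phi$ \emph{inside} $K$. Here the hypothesis $\ker(\phi)\le K$ enters: it forces $\ker(\phi) = \ker(\phi)\cap K = \{e\}$, so $\phi$ is injective globally, and Lemma \ref{le-star} (3.) then delivers that $\phi^*$ is surjective. I expect this to be the main (indeed the only) obstacle: without $\ker(\phi)\le K$ the restriction $\bar\phi$ could be injective while $\phi$ fails to be, so the containment of the kernel in $K$ is exactly what is needed to promote injectivity of $\bar\phi$ to injectivity of $\phi$.
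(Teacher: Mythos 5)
Your proposal is correct and matches the paper's own proof essentially step for step: both reduce via $\phi_K^* = (\phi\vert_K^{\phi(K)})^*$ to the restricted homomorphism, apply Lemma \ref{le-star} (2.) and (3.) in each part, and in part (3.) use $\ker(\phi) \leq K$ together with $\ker(\phi\vert_K^{\phi(K)}) = \ker(\phi) \cap K = \{e\}$ to upgrade injectivity of the restriction to injectivity of $\phi$. Your remark that $\ker(\phi) \leq K$ is exactly what the argument needs (and why it appears only in part (3.)) is an accurate reading of the same proof.
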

\begin{proof}
\begin{enumerate}
\item As $\phi \vert_K^{\phi(K)} : K \to \phi(K)$ is surjective, then $\phi^*_K = (\phi \vert_K^{\phi(K)})^*$ is injective by Lemma \ref{le-star}. 

\item If $\phi^*$ is surjective, then $\phi : H \to G$ is injective by Lemma \ref{le-star}. Then, $\phi \vert_K^{\phi(K)} : K \to \phi(K)$ is also injective, so $\phi^*_K = (\phi \vert_K^{\phi(K)})^*$ is surjective. 

\item If $\phi_K^*$ is surjective, then $\phi \vert_K^{\phi(K)} : K \to \phi(K)$ is injective. Since $\ker(\phi) \leq K$, we have
\[ \{ e\} = \ker \left( \phi \vert_K^{\phi(K)}  \right) = \ker(\phi) \cap K = \ker(\phi). \]
This shows that $\phi : H \to G$ is injective, so $\phi^*$ is surjective by Lemma \ref{le-star}.
\end{enumerate}
\end{proof}

\begin{theorem}
Let $K \leq H$ and $\phi \in \Hom(H,G)$. Let $\mathcal{T} : A^G \to A^H$ be a $\phi$-cellular automaton with memory set contained in $\phi(K)$. 
\begin{enumerate}
\item $\mathcal{T}$ is injective if and only if $\mathcal{T}_K$ is injective and $\phi$ is surjective.
\item $\mathcal{T}$ is bijective if and only if $\mathcal{T}_K$ is bijective and $\phi$ is bijective.  
\end{enumerate} 
\end{theorem}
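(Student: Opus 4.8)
The plan is to reduce both equivalences to the classical cellular automaton hidden inside $\mathcal{T}$ via the factorization of Lemma \ref{lemma:GCA-tau_phi}, together with its restricted analogue. First I would write $\mathcal{T} = \phi^* \circ \tau$ with $\tau \in \CA(A^G)$; since Lemma \ref{lemma:GCA-tau_phi} guarantees that $\tau$ shares the minimal memory set of $\mathcal{T}$, the map $\tau$ has a memory set contained in $\phi(K)$. As $\phi^*$ has memory set $\{e\} \subseteq \phi(K)$ and $\tau$ is an $\id$-cellular automaton with memory set contained in $\phi(K) = \id(\phi(K))$, Proposition \ref{indres-comp} (with inner homomorphism $\id$) applies to the composition $\phi^* \circ \tau$ and yields
\[ \mathcal{T}_K = \phi^*_K \circ \tau_{\phi(K)} = (\phi \vert_K^{\phi(K)})^* \circ \tau_{\phi(K)}, \]
where the second equality is the identity $\phi^*_K = (\phi \vert_K^{\phi(K)})^*$ shown above, and $\tau_{\phi(K)} \in \CA(A^{\phi(K)})$ is the classical restriction of $\tau$ to the subgroup $\phi(K) \leq G$.

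With these two factorizations I would collect four ingredients: (i) $\phi$ is surjective iff $\phi^*$ is injective and $\phi$ is injective iff $\phi^*$ is surjective (Lemma \ref{le-star}), so $\phi$ is bijective iff $\phi^*$ is bijective; (ii) $\phi^*_K$ is always injective, and it is bijective as soon as $\phi$ is bijective, since then $\phi \vert_K^{\phi(K)}$ is a bijection (Corollary \ref{cor-phi-res}(1) and Lemma \ref{le-star}); (iii) $\mathcal{T}$ injective forces $\phi$ surjective and $\mathcal{T}$ surjective forces $\phi$ injective (Lemma \ref{le-phi}); and (iv) by the classical restriction theorem \cite[Sec. 1.7]{CSC10}, $\tau$ is injective (resp. surjective) if and only if $\tau_{\phi(K)}$ is.

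For part (1), the forward direction uses that $\mathcal{T}$ injective gives $\phi$ surjective by (iii) and forces $\tau$ injective (it is the inner factor of $\mathcal{T} = \phi^* \circ \tau$); then (iv) makes $\tau_{\phi(K)}$ injective, and since $\phi^*_K$ is injective by (ii), the factorization of $\mathcal{T}_K$ shows $\mathcal{T}_K$ injective. Conversely, if $\mathcal{T}_K$ is injective and $\phi$ is surjective, then $\tau_{\phi(K)}$ is injective as the inner factor of $\mathcal{T}_K$, so $\tau$ is injective by (iv), and $\mathcal{T} = \phi^* \circ \tau$ is injective because $\phi^*$ is injective by (i). For part (2), I would first use both forms of (iii) to upgrade ``$\phi$ surjective'' to ``$\phi$ bijective'', then invoke part (1) to handle the two injectivity halves, leaving only surjectivity. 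Now (i)--(ii) make $\phi^*$ and $\phi^*_K$ bijective, so they may be cancelled on the left: $\mathcal{T}$ surjective gives $\tau$ surjective, hence $\tau_{\phi(K)}$ surjective by (iv), hence $\mathcal{T}_K = \phi^*_K \circ \tau_{\phi(K)}$ surjective; and symmetrically $\mathcal{T}_K$ surjective gives $\tau_{\phi(K)}$ surjective, hence $\tau$ surjective by (iv), hence $\mathcal{T}$ surjective.

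I expect the surjectivity transfer to be the only delicate point, and it is the reason for the asymmetry between the two statements. Because $\phi^*$ is post-composed with $\tau$, surjectivity of $\mathcal{T}$ says nothing about $\tau$ unless $\phi^*$ is injective; likewise $\mathcal{T}_K$ controls $\tau_{\phi(K)}$ only once $\phi^*_K$ is injective. This is exactly why part (1) yields merely ``$\phi$ surjective'', whereas part (2) must first promote this to ``$\phi$ bijective'' before the cancellation arguments go through. A minor point to verify is that the memory-set hypotheses of Proposition \ref{indres-comp} genuinely hold for $\phi^* \circ \tau$, which they do since $e \in \phi(K)$ and $\tau$ inherits the memory set of $\mathcal{T}$.
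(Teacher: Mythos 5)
Your proposal is correct and is essentially the paper's own proof: both factor $\mathcal{T} = \phi^* \circ \tau$ via Lemma \ref{lemma:GCA-tau_phi}, deduce $\mathcal{T}_K = \phi^*_K \circ \tau_{\phi(K)}$ from Proposition \ref{indres-comp}, and transfer injectivity and surjectivity back and forth using Lemma \ref{le-phi}, Lemma \ref{le-star}, Corollary \ref{cor-phi-res}, and the classical restriction result \cite[Prop. 1.7.4]{CSC10}. Your explicit check of the memory-set hypotheses for Proposition \ref{indres-comp} and the spelled-out cancellation argument in part (2) merely fill in details where the paper writes ``analogous.''
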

\begin{proof}
By Lemma \ref{lemma:GCA-tau_phi}, there exists $\tau \in \CA(A^G)$ such that $\mathcal{T} = \phi^* \circ \tau$. By Proposition \ref{indres-comp}, 
\[ \mathcal{T}_K = \phi^{\ast}_K \circ \tau_{\phi(K)}. \]
\begin{enumerate}
\item Suppose that $\mathcal{T}$ is injective. By Lemma \ref{le-phi}, $\phi$ is surjective.  The injectivity $\mathcal{T}=\phi^* \circ \tau$ implies that $\tau$ is injective, and $\tau_{\phi(K)}$ is injective by \cite[Prop. 1.7.4]{CSC10}. As $\phi_K^*$ is always injective by Corollary \ref{cor-phi-res}, then $\mathcal{T}_K =  \phi_K^* \circ \tau_{\phi(K)}$ is also injective. 

Conversely, suppose that $\mathcal{T}_K =  \phi_K^* \circ \tau_{\phi(K)}$ is injective and $\phi$ is surjective. Then $\tau_{\phi(K)}$ is injective, and $\tau$ is injective by \cite[Prop. 1.7.4]{CSC10}. Moreover, $\phi^*$ is injective by Lemma \ref{le-star}, so $\mathcal{T} = \phi^* \circ \tau$ is injective. 

\item This proof is analogous to the proof of part (1.) and uses the fact that $\tau$ is bijective if and only if $\tau_{\phi(K)}$ is bijective \cite[Prop. 1.7.4]{CSC10}. 
\end{enumerate}
\end{proof}

\begin{question}
What can we say about the relationship of between the surjectivity of $\mathcal{T}$ and the surjectivity of $\mathcal{T}_K$? 
\end{question}


\section*{Acknowledgments}

We sincerely thank the anonymous reviewer of this paper for their careful reading and insightful comments; in particular, they greatly simplified the proof of Lemma \ref{le-power}. The second  author was supported by a CONAHCYT Postdoctoral Fellowship \emph{Estancias Posdoctorales por M\'exico}, No. I1200/320/2022.



\begin{thebibliography}{}

\bibitem{BS62} Baumslag, G., Solitar, D.: Some two-generator one-relator non-Hopfian groups, Bull. Amer. Math. Soc. \textbf{68} (1962) 199--201 (1962).

\bibitem{GCA} Castillo-Ramirez, A.,  Sanchez-Alvarez, M., Vazquez-Aceves, A., Zaldivar-Corichi, A.: A generalization of cellular automata over groups, Communications in Algebra \textbf{51.7} (2023) 3114-3123. \texttt{https://doi.org/10.1080/00927872.2023.2177663}

\bibitem{CSC10} Ceccherini-Silberstein, T., Coornaert, M.: Cellular Automata and Groups. Springer Monographs in Mathematics, Springer-Verlag Berlin Heidelberg, 2010.

\bibitem{ECAG} Ceccherini-Silberstein, T., Coornaert, M.: Exercises in Cellular Automata and Groups. Springer Monographs in Mathematics, Springer Cham, Switzerland, 2023.

\bibitem{CSC09} Ceccherini-Silberstein, T., Coornaert, M.: Induction and restriction of cellular automata, Ergodic Theory and Dynamical Systems \textbf{29.2} (2009) 371 -- 380. \texttt{https://doi.org/10.1017/S0143385708080437} 

\bibitem{LM95} Lind, D., Marcus, B.: An Introduction to Symbolic Dynamics and Coding. Cambridge University Press, 1995. 

\bibitem{GLM} Gr\"uner, T., Laue, R., Meringer, M.: Algorithms for group actions applied to graph generation. In Groups and Computation II: Workshop on Groups and Computation, June 7--10, 1995 (L. Finkelstein and W.M. Kantor, Eds), American Mathematical Soc., 1995.

\end{thebibliography}
\end{document}